\documentclass{article}[11pt]

\usepackage[a4paper,top=4.0cm,bottom=2.0cm,left=2.0cm,right=2.0cm]{geometry}
%\usepackage{natbib}
%\bibpunct{(}{)}{;}{d}{,}{,}

\usepackage{amsmath,amsfonts,mathrsfs,amsfonts,ulem}
\usepackage{epsfig,graphicx}
\usepackage{color}
\usepackage{todonotes}
\usepackage{enumerate}
\usepackage{lineno}
%\usepackage[linesnumbered, ruled]{algorithm2e}
%\usepackage{algorithmic}

%\newcommand{\diag}{\operatorname{diag}}

%\newsiamthm{remark}{Remark} 
%\newsiamremark{remark}{Remark}

\usepackage{amsmath,amsfonts,mathrsfs,amsfonts, color, bbold,bm,blindtext}

\usepackage{algorithmicx,algorithm,algpseudocode}
%\Crefname{ALC@unique}{Line}{Lines}
%\theoremstyle{definition}
%\newtheorem{defn}{Definition}[section]

\allowdisplaybreaks

\newcommand{\E}{\mathbb{E}}
\newcommand{\Prob}{\mathbb{P}}
\newcommand{\calM}{\mathcal{M}}
\newcommand{\calF}{\mathcal{F}}
\newcommand{\dist}{\mathbf{d}} 

%\numberwithin{theorem}{section}
\newtheorem{theorem}{Theorem}[section]
\newtheorem{aspt}[theorem]{Assumption}
\newtheorem{defn}[theorem]{Definition}
\newtheorem{cor}[theorem]{Corollary}
\newtheorem{lemma}[theorem]{Lemma}
\newenvironment{proof}[1][Proof]{\begin{trivlist}
\item[\hskip \labelsep {\bfseries #1}]}{\end{trivlist}}

\newenvironment{remark}[1][Remark]{\begin{trivlist}
\item[\hskip \labelsep {\bfseries #1}]}{\end{trivlist}}
\begin{document}
%\linenumbers

\title{Analysis of a localised nonlinear Ensemble Kalman Bucy Filter with complete and accurate observations}

\author{Jana de Wiljes\thanks{Universit\"at Potsdam, 
			Institut f\"ur Mathematik, Karl-Liebknecht-Str. 24/25, D-14476 Potsdam, Germany ({\tt wiljes@uni-potsdam.de}) and University of Reading, Department of Mathematics and Statistics, Whiteknights, PO Box 220, Reading RG6 6AX, UK} \and Xin T. Tong\thanks{National University Singapore,10 Lower Kent Ridge Road,
Singapore 119076 ({\tt mattxin@nus.edu.sg})
}}
\maketitle

\begin{abstract} 
Concurrent observation technologies have made high-precision real-time data available in large quantities. Data assimilation (DA) is concerned with how to combine this data with physical models to produce accurate predictions. For spatial-temporal models, the Ensemble Kalman Filter with proper localization techniques is  considered to be a state-of-the-art DA methodology.
This article proposes and investigates a localized Ensemble Kalman Bucy Filter (l-EnKBF) for nonlinear models with short-range interactions. We derive dimension-independent and component-wise error bounds and show the long time path-wise error only has logarithmic dependence on the time range. The theoretical results are verified through some simple numerical tests.

%Here the assumption is that the associated filtering distribution is Gaussian and can thus be fully described by its mean and covariance configurations which lead to singular empirical covariance matrices $P_t^M$.

\end{abstract}

%\noindent
%{\bf Keywords.} Data assimilation, localization, stability and accuracy, dimension independent bound \\
%\noindent {\bf AMS(MOS) subject classifications.} 65C05, 62M20, 93E11, 62F15, 86A22

%%%%%%%%%%%%%%%%%%%%%%%%%%%%%%%%%%%%%%%%%
%

\section{Introduction}
With the advancement of technology, we now have access to vast amounts of high-precision data in many areas of science. It is important to develop robust and efficient tools to combine the available data with refined large-scale physical models. This study is known as data assimilation (DA) and typically the goal is to produce accurate real-time estimations of the current state of the system. 

In geophysical problems, the considered models often have vast spatial scales, therefore millions of state variables are needed to store information at different locations. Such high dimensionality poses a severe challenge to DA methodologies, since the associated computations are expensive and direct global uncertainty quantification tends to be erroneous. Over the last two decades, various computationally feasible approaches have been developed with practical success \cite{saerkkae2013,sr:kalnay,jdw:Evensen2006,jdw:OliverReynoldsLiu2008}. One of the most popular algorithms among these is the Ensemble Kalman filter (EnKF). It has been first derived in \cite{jdw:Evensen2006} and heavily advanced and employed in the field of numerical weather prediction. 
To combat dimensionality issues arising due to the extent of the spatial domain, the so called localization techniques are often employed for the EnKF \cite{sr:reichcotter15,sr:gaspari99}. The key motivation behind localization is that many systems exhibit a natural decrease in spatial correlation. This can guide artificial tunings of the empirical covariance matrix to avoid spurious correlations.

% and that small ensemble sizes combined with large state s%paces can cause spurious correlations. This issue can be addressed by artificially tuning the empirical covariance.

The empirical success of  EnKF has aroused great interest in understanding the underlying theoretical properties \cite{KMT15,sr:majda15,sr:br11,sr:akir11}.  EnKFs can be interpreted as Monte Carlo implementations of the Kalman Filter \cite{jdw:Kalman1960,sr:jazwinski,referee2a,referee2b,referee2c} which is derived for linear prediction and observation models. Therefore most theoretical studies of EnKFs assume a linear setting \cite{mandel2011convergence, BM17, DT18, MT18cpam, Tong18}. Existing analysis of EnKFs for nonlinear models concern mostly the boundedness  of algorithm outputs \cite{sr:KellyEtAl14,sr:majda15,KMT15}, which is not helpful in understanding EnKF performance. The only exception is a recent work \cite{deWiljesReichStannat2018}, where accuracy and stability results have been derived assuming abundant and accurate observations. However, the results there do not consider localization, and hence they require the sample size to be larger than the state dimension. This is infeasible in practice. 

This paper intends to close the aforementioned gaps, i.e. nonlinearity and high dimensionality in filter performance analysis, by investigating a localised Ensemble Kalman-Bucy Filter (l-EnKBF).  Following \cite{deWiljesReichStannat2018}, we  assume abundant and accurate observations are available.
Since most geophysical models are formulated through partial differential equations or their discretizations, the associated prediction dynamics often have a short interaction range. This is often paired with a short decorrelation length in the localization technique to reduce the potential spurious long-range correlations.
Under these assumptions, we show that l-EnKBF  estimation error for \emph{each component is bounded independent of the overall dimension}, both in the sense of mean square and the moment generating function. Such result does not exist in literature for DA analysis, based on our knowledge. Some related dimension-independent error analysis can be found in \cite{MT18cpam,Tong18}, but the error estimates are implicit and the models are assumed to be linear. Moreover, we also show the long time path-wise error has a logarithmic dependence on the time range, which is much weaker than the square root dependence in \cite{deWiljesReichStannat2018}. All these results indicate l-EnKBF has stable and accurate estimation skills.

%In practice the imbalance between number of ensemble members and  Here we considering a localised version of the EnKBF (l-EnKBF) to be able to generalize the accuracy and stability results from \cite{deWiljesReichStannat2018} to $M\gg N_x$.

In Section \ref{sec:setting} the underlying setting is outlined and the considered l-EnKBF will be defined. Upper and lower bounds for the empirical second moment are derived in Section \ref{sec:Stability}. Then point-wise and path-wise bounds for the mean squared error and a Laplace type condition are derived in Section \ref{sec:l2} in the $l_2$ sense, and in Section \ref{sec:comp} in the component-wise sense. We allocate the proofs of our results in the appendix. In Section \ref{sec:Numericalinvestigation}, the numerical sensitivity of an implementation of the considered l-EnKBF with respect to the underlying assumptions is tested for the Lorenz 96 system. 

Throughout the article we assume $(\| \cdot \|, \langle \cdot , \cdot \rangle)$ denotes  the $l_2$-norm with its corresponding inner product. Given a matrix $A \in \mathbb{R}^{m \times n}$ the $l_2$-operator norm is defined as 
\[
\| A \| = \max_{\| x \| = 1} \| Ax \| = \sqrt{\lambda_{\max}(A^{\top} A)},\\
\]
where $\lambda_{\max}$ denotes the largest eigenvalue of a matrix. The following two matrix norms are also useful to us:
\begin{align*}
 &\|A\|_{1}=\max _{1\leq j\leq n}\sum _{i=1}^{m}|A_{i,j}|\\
&\|A\|_{\max} = \max_{ij} |[A]_{i,j}|
\end{align*}
where both $A_{i,j}$ and $[A]_{i,j}$ denote the entries of the matrix $A$. The bracket notation is necessary to denote matrix entries such as $[A^{-1}]_{i,j}$ or $[AB]_{i,j}$. Given two symmetric matrices $A$ and $B$ then $A \succeq B$ implies the matrix $A-B$ is positive semidefinite, which is equivalent to $v^T (A-B) v \geq 0$ for all $v \in \mathbb{R}^n$. Given a covariance matrix $\Gamma$ the Mahalanobis norm  is defined by $\|v\|^2_\Gamma=v^T \Gamma^{-1}v$. Lastly, in order to describe the smallness of certain quantities, we use the Big Theta notation. In particular, a quantity $a_\epsilon$ is $\Theta(\epsilon^p)$, if there is a $\epsilon$-independent constant $C>0$ and $c>0$ so that $c\epsilon^p\leq a_\epsilon\leq C\epsilon^p$.
%
%\textcolor{red}{Xin, I changed the definition (old is uncommented in the tex files) here because first of all I think it was not offical Bigh O notation and in most cases the standard O notation seems sufficient (when we use it in the text and we also use the notation $o(\cdot)$). Anyways Have a look if you agree, because we use the notation several times. }
%
%

\section{Problem setup}\label{sec:setting}
In this paper, we consider a continuous-time filtering problem, formulated by 
\begin{equation}
\label{eq:model}
\begin{gathered}
dX_t=f(X_t) dt+\sqrt{2}\sigma dW_t,\\
dY_t=HX_tdt+RdB_t.
\end{gathered}
\end{equation}
In \eqref{eq:model}, $X_t\in \mathbb{R}^{N_x}$ represents the system we try to recover. We assume its initial distribution is given by $X_0\sim \pi_0$.  Its dynamics is driven by a deterministic forcing described by a map $f:\mathbb{R}^{N_x}\rightarrow\mathbb{R}^{N_x}$ and  a stochastic forcing term $\sqrt{2}\sigma dW_t$. We assume linear noisy observations $Y_t\in  \mathbb{R}^{N_y}$ of the system are available. In \eqref{eq:model}, the matrices $\sigma$ and $R$ are positive definite matrices, and $W_t\in \mathbb{R}^{N_x}$ and $B_t\in \mathbb{R}^{N_y}$ are 
independent Wiener processes.

In many spatial models, each model component is representing a state information at one spatial location. This introduces a natural distance between two indices, which we will denote as $\dist$. As a simple example, For example, if the indices are representing themself on the interval $[1,n]$, then $\dist(i,j)$ can be taken as $|i-j|$. For another example, if the indices are representing equally spaced points on a length $n$ circle, then $\dist(i,j)$ be taken as $\min\{|i-j|, n-|i-j|\}$.
%$\min\{|i-j|, |i+n-j|, |j+n-i|\}$. 

We will use $x_i(t)$ to denote the $i$-th component of $X_t$, so $X_t=[x_1(t),\ldots, x_{N_x}(t)]^T$. We will also use $f_i$ and $w_i(t)$ to denote the $i$-th component of $f$ and $W_t$. For notational simplicity, we will often write $x_i(t)$ as $x_i$ and $w_i(t)$ as $w_i$, whenever their dependence on time is evident. Then the SDE that $x_i$ follows is given by 
\begin{equation}
\label{eq:marginal}
dx_i=f_i(X_t)dt+\sqrt{2}\sigma dw_i. 
\end{equation}
Note that different components are interacting through the drift term, as $f_i(X_t)$ could have dependence on $x_j(t)$ for $j\neq i$. But in many physical processes, such interactions are of short range, meaning the dependence of $f_i(X_t)$ on $x_j(t)$ decays with $\dist(i,j)$. More generally, this can be formulated as
\begin{aspt}[Short range interaction]
\label{aspt:short}
There is a sequence of Lipschitz constants $\mathcal{F}_k$, such that for any $X=[x_1,\ldots, x_{N_x}]$ and $X'=[x'_1,\ldots, x'_{N_x}]$, the following holds
\[
|f_i(X)-f_i(X')|\leq \sum_{j=1}^{N_x} \mathcal{F}_{\dist(i,j)} |x_j-x'_j|. 
\]
\end{aspt}
To have a single number controlling the overall stability of the system, we will consider the largest row sum of these Lipschitz constants and define
\begin{equation}
\label{eq:Cf}
 C_f:=\max_i\sum_{j=1}^{N_x} \mathcal{F}_{\dist(i,j)}. 
\end{equation}
 We will assume that $C_f$ is a constant independent of the dimension $N_x$. This can be verified if $\mathcal{F}_k$ decays to zero exponentially with increasing $k$.
 In Section \ref{sec:Numericalinvestigation}, we demonstrate how to verify Assumption \ref{aspt:short} on the Lorenz 96 model, assuming all components are bounded. 
  
In computational models, Assumption \ref{aspt:short} often holds if the spatial resolution is at the same scale of the spatial correlation length. 
 A large $N_x$ indicates that the spatial domain size is large. It is worthwhile mentioning that, 
 it is also possible to obtain a high dimensional model with a moderate size  spatial domain, if one use very small spatial resolution.
 But Assumption \ref{aspt:short} is unlikely to hold in such a setting, and localization techniques are not meant to resolve such high dimensionality. One should use dimension reduction techniques instead \cite{MT18cpam}.
 The difference between these two high dimensional settings are discussed in \cite{MTM19,MTM20}.

\subsection{Localized ensemble Kalman-Bucy filter}
Here we will consider a deterministic EnKBF first proposed in \cite{sr:br11} that has been shown to be the time limit of a broad class of Ensemble Square Root Filters \cite{jdw:LangeStannat2019b}. Let $\{X^i_t\}_{i=1,\ldots,M}$ be the ensemble of particles which describe the uncertainty of $X_t$. To run the considered algorithm, each of the particle is initialized at a random location from $\pi_0$ and then driven by the following dynamics
\begin{equation}
\label{eq:EnKBF}
dX^i_t=f(X^i_t)dt+\sigma^2 P^{-1}_t(X_t^i-\overline{X}_t)dt- \frac{1}{2}P_t H^T(RR^T)^{-1}(HX^i_tdt+H \overline{X}_t dt-2dY(t)).
\end{equation}
In \eqref{eq:EnKBF}, the sample mean and covariance are defined by 
\[
\overline{X}_t=\frac1M \sum_{i=1}^M X^i_t,\quad P_t= \frac1{M-1} \sum_{i=1}^{M} (X^i_t-\overline{X}_t)(X^i_t-\overline{X}_t)^T. 
\]
The posterior distribution of $X_t$ conditioned on $Y_{s\leq t}$ is then approximated by the Gaussian distribution $\mathcal{N}(\overline{X}_t, P_t)$. It is important to mention that for a linear drift $f$ the EnKBF in (\ref{eq:EnKBF}) converges to the KBF for $M\rightarrow \infty$. Further a mean-field limit has been derived for the nonlinear drift scenario \cite{deWiljesReichStannat2018}. Note that mean field limits of EnKFs for a nonlinear setting have also been derived in \cite{referee2d}.  

When the dimension is high, EnKBF is in general ill-defined and it can perform poorly. This is because of two reasons. First, the rank of $P_t$ is at maximum $M-1$. So if $M\ll N_x$, $P_t$ is singular and its numerical approximated inverse is usually unstable. Second, by random matrix theory, it is known that if $X^i_t$ are i.i.d. samples from a Gaussian distribution $\mathcal{N}(0, P)$, in order for the covariance sampler error in $l_2$-norm $\|P-P_t\|$ to be small, one needs  $M=O(N_x)$. 
In other words, $P_t$ is a very inaccurate approximation of the true posterior covariance when $M\ll N_x$ \cite{Tong18}. 

In practice, one popular way to resolve the issues mentioned above is to apply covariance localization. Mathematically, this operation can be formulated as replacing 
$P_t$ in \eqref{eq:EnKBF} with $P^L_t=P_t\circ \phi$. Here $\circ$ denotes the component-wise product or Schur product, so the components of $P_t^L$ are defined as 
\begin{equation}
\label{eqn:PL}
%\label{eqn:PL}
[P^L_t]_{i,j}:=[P_t]_{i,j} \phi_{i,j}.
\end{equation}
The symmetric matrix $\phi$ here is called a localization matrix. Its components are nonnegative. They are of value $1$ at the diagonal, and decay to zero extremely fast along the off diagonal direction. One popular choice takes the form of $\phi_{i,j}=\rho(\frac{\dist(i,j)}{l})$, where $\rho$ is a function from (4.10) in \cite{sr:gaspari99} 
\begin{equation}
\label{eqn:GC}
\rho(x)=\begin{cases}
-\frac14x^5+\frac12x^4+\frac58 x^3-\frac53x^2+1,\quad &|x|\leq 1;\\
\frac1{12}x^5-\frac12x^4+\frac58 x^3+\frac53 x^2-5x+4-\frac2{3x},\quad &1\leq|x|\leq 2;\\
0\quad &2\leq |x|. 
\end{cases}
\end{equation}
where $l$ denotes the typical decorrelation length, which we assume to be independent of $N_x$. We will  consider again the largest row sum of $\phi$, and define
\begin{equation}
\label{eq:Cphi}
 C_\phi:=\max_i\sum_{j=1}^{N_x} \phi_{i,j}. 
\end{equation}
 We will assume $C_\phi$ is a constant independent of the dimension $N_x$. This is true for most practical localization matrices including \eqref{eqn:GC}. 

When the true covariance matrix is spatially localized, $P^L_t$ is a much better covariance estimator, because the localization operation eliminates spurious long distance correlation errors \cite{BL08}. Moreover, the localization operation improves the rank, so $P^L_t$ is often full rank and invertible. But this is not guaranteed in general. So for the rigorousness of this exposition, we use the following inversion
\begin{defn}
\label{defn:DI}
If all diagonal entries of $P_t$ are nonzero, then its diagonal inverse (DI) is given by
\[
[P_t^\dagger]_{i,i}=[P_t]_{i,i}^{-1},\quad [P_t^\dagger]_{i,j}=0,\quad \forall i,j=1,\ldots,n, \,\,i\neq j.
\]
\end{defn}
Note that it satisfies the following for all $i=1,\ldots, n$
\begin{equation}
\label{eqn:DI}
[P_t^\dagger  P_t]_{i,i}=[P_t^\dagger   P_t]_{i,i}=1. 
\end{equation}

In the original EnKBF formulation \eqref{eq:EnKBF}, we replace $P_t$ with $P_t^L$ and $P_t^{-1}$ with $P_t^\dagger$ and we obtain the localized EnKBF (l-EnKBF):
\begin{equation}\label{eq:lEnKBF}
dX^i_t=f(X^i_t)dt+\sigma^2 P^\dagger_t(X_t^i-\overline{X}_t)dt- \frac{1}{2}P^L_t H^T (RR^T)^{-1}(HX^i_tdt+H \overline{X}_t dt-2dY_t).
\end{equation}
As a remark, the using of $P_t^\dagger$ simplifies the theoretical derivation in below, since we can verify that $P_t^\dagger$ is well defined (see Lemma \ref{lem:lowerPt} below). Meanwhile, it is an open question on how to generalize our results to other versions of  pseudo inverse for $P_t$.

\subsection{Abundant and accurate observations}
When the observation sources are abundant, $H$ in \eqref{eq:model} can be assumed to be of rank $N_x$, and there is an $H^-\in \mathbb{R}^{N_x\times N_y}$ such that $H^- H=I_{N_x}$. We can consider the following transformation 
\[
\widetilde{X}_t=\sigma^{-1}X_t,\quad \tilde{f}(X)=\sigma^{-1}f(\sigma X), \quad \widetilde{Y}_t=\sigma^{-1}H^{-} Y_t,\quad \widetilde{R}=\sigma^{-1}H^{-} R,
\] 
then $\widetilde{X}_t$ and $\widetilde{Y}_t$ follow the SDE in below
\begin{equation}
\label{eq:lintran}
\begin{gathered}
d\widetilde{X}_t=\sigma^{-1}dX_t=\tilde{f}(\widetilde{X}_t)dt+\sqrt{2}dW_t,\\
d\widetilde{Y}_t=\sigma^{-1}H^{-} dY_t=\sigma^{-1}X_t+\sigma^{-1}H^{-}R dB_t=\widetilde{X}_tdt+ \widetilde{R} dB_t. 
\end{gathered}
\end{equation}
If we apply l-EnKBF \eqref{eq:lEnKBF} to the transformed system $(\widetilde{X}_t, \widetilde{Y}_t)$, then the sample mean and covariance matrices will follow
\[
\overline {\tilde{x}}_t= \sigma^{-1} \overline{X}_t,\quad \widetilde{P}_t=\sigma^{-2} P_t,\quad \widetilde{P}^L_t=\sigma^{-2} P^L_t,
\]
while $\widetilde{P}^\dagger_t$ can be taken as $\sigma^{2} P^\dagger_t$. Then the dynamics of each l-EnKBF particle will satisfy
\begin{align*}
d\widetilde{X}^i_t&=\widetilde{f}(\widetilde{X}^i_t)dt+\widetilde{P}^\dagger_t(\widetilde{X}_t^i-\overline {\tilde{x}}_t)dt- \frac{1}{2}\widetilde{P}^L_t (\widetilde{R} \widetilde{R}^T)^{-1}(\widetilde{X}^i_tdt+\overline{\tilde{x}}_t dt-2d\widetilde{Y}_t)\\
&=\sigma^{-1}f(X^i_t)dt+\sigma P^\dagger_t(X_t^i-\overline{X}_t)dt- \frac{1}{2}\sigma^{-1}P^L_t H^T(RR^T)^{-1}(HX^i_tdt+H \overline{X}_t dt-2dY_t)=\sigma^{-1}dX^i_t. 
\end{align*}
It is evident that the theoretical properties of $X^i_t$ will be the same as the ones of $\widetilde{X}^i_t$.

Note that \eqref{eq:lintran} corresponds to the original model \eqref{eq:model} with $\sigma=1$ and $H=I$. This is a much simplified parameter setting for followup discussion. And from the above derivation, there is no sacrifice of generality by focusing on it. Under this setting, the l-EnKBF formula will be simplified as 
\[
dX^i_t=f(X^i_t)dt+ P^\dagger_t(X_t^i-\overline{X}_t)dt- \frac{1}{2}P^L_t \Omega_R (X^i_tdt+\overline{X}_t dt-2dY_t),\quad \Omega_R:=(RR^T)^{-1}. 
\]
When the observations are accurate and independent, the observation noise covariance $RR^T$ is a diagonal matrix with small components. We will use $\epsilon$ to describe their order. In summary, we have made the following assumption 
\begin{aspt}
\label{aspt:trans}
Through a linear transformation, we assume \eqref{eq:model} is transformed to
\[
\begin{gathered}
dX_t=f(X_t) dt+\sqrt{2} dW_t,\\
dY_t=X_tdt+R dB_t
\end{gathered}
\]
Moreover we assume for an $\epsilon>0$ that $\Omega=\epsilon (RR^T)^{-1}$ is diagonal, and bounded by constants $\omega_{\min }I\preceq \Omega\preceq \omega_{\max}I$. 
\end{aspt}
Note that Assumption \ref{aspt:trans} implies that $RR^T=\Theta(\epsilon)$. In other words we assume that the squared observation error covariance matrix is of order $\epsilon$.

By replacing $\Omega_R$ with $\epsilon^{-1}\Omega$, the l-EnKBF formula is written as
\begin{equation}\label{sys:lEnKBF}
dX^i_t=f(X^i_t)dt+ P^\dagger_t(X_t^i-\overline{X}_t)dt- \frac{1}{2\epsilon}P^L_t \Omega (X^i_tdt+\overline{X}_t dt-2dY_t). 
\end{equation}
Since $\overline{X}_t=\frac1M \sum_{i=1}^MX^i_t$, the sample mean process follows the following dynamics
\begin{equation}
\label{eq:evolutionmean}
d\overline{X}_t=\overline f_tdt- \epsilon^{-1}P^L_t \Omega ( \overline{X}_t dt-dY_t),\quad \overline f_t:=\frac1M\sum_{i=1}^M f(X^i_t). 
\end{equation}
So if we denote $\Delta X^i_t=X^i_t-\overline{X}_t$, it follows the ordinary differential equation (ODE)
\[
\frac{d}{dt}\Delta X^i_t=f(X^i_t)-\overline f_t+P^\dagger_t\Delta X^i_t-\frac{1}{2\epsilon}P^L_t \Omega \Delta X^i_t.
\]
Because the sample covariance $P_t=\frac1{M-1}\sum_{i=1}^{M} \Delta X^i_t (\Delta X^i_t)^T$, we have
\begin{equation}
\label{eq:evolutionP_t}
\frac{d}{dt} P_t= (F_t+F_t^T)+(P_t^\dagger P_t+P_t P_t^\dagger ) -\frac1{2\epsilon}(P^L_t \Omega P_t+P_t \Omega P^L_t)
\end{equation}
where $F_t:=\frac1{M-1} \sum (X^i_t-\overline{X}_t)(f(X^i_t)-\overline f_t)^T$.

%
%\begin{lemma}
%\label{lem:boundonderivative1}
%Suppose a process $u_t\in \mathbb{R}^+$ satisfies the following for positive constants $a,b$ and $c$,
%\begin{equation*}
%\frac{du_t}{dt} \le -cu_t^2 + au_t + b\quad \forall t\geq 0.
%\end{equation*}
%Let $u_*=\frac{a}{2c}+\sqrt{\left(\frac{a}{2c}\right)^2+\frac{b}{c}}$, we have
%\begin{equation*}
%u_t\le \max\left\{u_0,u_*\right\},\quad \limsup_{t\to \infty} u_t\leq u_*.
%\end{equation*}
%Analogously,  suppose that
% \begin{equation*}
%\frac{du_t}{dt} \ge -cu_t^2 + au_t + b\quad \forall t\geq 0,
%\end{equation*}
%then
%\begin{equation*}
%u_t\ge \min\left\{u_0,u_*\right\},\quad \liminf_{t\to \infty} u_t\geq u_*.
%\end{equation*}
%\end{lemma}
%\begin{proof}
%Let $g(u)= -cu_t^2 + au_t + b$, it is evident that $g(u)<0$ if $u>u_*:=\frac{a}{2c}+\sqrt{\left(\frac{a}{2c}\right)^2+\frac{b}{c}}$. Suppose $\frac{du_t}{dt} \le g(u_t)$, then  $g(u_t)$ is decreasing when $u_t\geq u_*$.
%This leads to our first claim. The second claim is just a  reverse.  
%\end{proof}

\section{Main results}
We present  our main theoretical results for the l-EnKBF in \eqref{eq:lEnKBF} in this section. To keep to discussion  concise, we allocate the technical verifications to the appendix.

\subsection{Wellposedness and Stability}\label{sec:Stability}
Before the accuracy of the filter can be addressed it is crucial to check if the l-EnKBF can blow-up or collapse. In other words, we will demonstrate that the filter is stable, such that there are upper and lower bounds for $P_t$. The upper bound is established by the following:

\begin{lemma}
\label{lem:upperPt}
Under Assumptions \ref{aspt:short} and \ref{aspt:trans}, suppose $P_t^L$ evolving in time according to (\ref{eq:evolutionP_t}) exists, the following holds
\[
\|P_t\|_{\max}\leq  \lambda_{\max}:= \frac{2\epsilon}{\omega_{\min}}\left(\sqrt{C_f^2+\frac{3\omega_{\min}}{\epsilon} }\right),\quad \forall t>t'_*:=\frac{\omega_{\min} \epsilon}{\lambda_{\max}}.
\]
And for all $t>0$, $\|P_t\|_{\max}\leq \max\{\|P_0\|_{\max}, \lambda_{\max}\}$. It is clear that when $C_f$ and $\omega_{\min}$ are constants, $\lambda_{\max}(\epsilon)=\Theta(\sqrt{\epsilon}),t'_*=\Theta(\sqrt{\epsilon})$. 
\end{lemma}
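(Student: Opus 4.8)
The plan is to reduce the whole statement to a single scalar Riccati differential inequality for the quantity $p_*(t):=\max_i[P_t]_{i,i}$. The starting point is that $P_t$ is a sample covariance, hence positive semidefinite; applying the $2\times 2$ principal-minor inequality $|[P_t]_{i,j}|\le\sqrt{[P_t]_{i,i}[P_t]_{j,j}}$ shows that the largest entry in magnitude always sits on the diagonal, so $\|P_t\|_{\max}=\max_i[P_t]_{i,i}=p_*(t)$. Thus it suffices to control $p_*$. Taking the $(i,i)$ entry of \eqref{eq:evolutionP_t} (and assuming, as in the hypothesis, that the evolution and the diagonal inverse $P^\dagger_t$ are well defined),
\[
\frac{d}{dt}[P_t]_{i,i}=2[F_t]_{i,i}+[P^\dagger_tP_t]_{i,i}+[P_tP^\dagger_t]_{i,i}-\frac1{2\epsilon}\big([P^L_t\Omega P_t]_{i,i}+[P_t\Omega P^L_t]_{i,i}\big),
\]
and I would estimate the three groups of terms separately.

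The middle group equals exactly $2$ by the diagonal-inverse identity \eqref{eqn:DI}, since $[P^\dagger_t]_{i,i}=[P_t]_{i,i}^{-1}$. For the dissipative group I use that $\Omega$ is diagonal and $P^L_t=P_t\circ\phi$, so that $[P^L_t\Omega P_t]_{i,i}=[P_t\Omega P^L_t]_{i,i}=\sum_k\omega_k\phi_{i,k}[P_t]_{i,k}^2$; every summand is nonnegative and the $k=i$ term is $\omega_i[P_t]_{i,i}^2$ (recall $\phi_{i,i}=1$), so this group is bounded above by $-\frac{\omega_{\min}}{\epsilon}[P_t]_{i,i}^2$. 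For the forcing group I read $[F_t]_{i,i}$ as the ensemble covariance between the $i$-th coordinate and $f_i$, and bound $|[F_t]_{i,i}|\le\sqrt{[P_t]_{i,i}}\sqrt{V_i}$ by Cauchy--Schwarz, where $V_i$ is the empirical variance of the drift component $f_i$ across the $M$ ensemble members. Here Assumption \ref{aspt:short} enters decisively: writing $V_i$ in its pairwise-difference form and applying a weighted Cauchy--Schwarz together with \eqref{eq:Cf} gives $V_i\le C_f\sum_j\mathcal{F}_{\dist(i,j)}[P_t]_{j,j}\le C_f^2\,p_*$. Combining the three estimates, at any index $i$ realizing $[P_t]_{i,i}=p_*$ one obtains the closed scalar inequality
\[
\frac{d}{dt}[P_t]_{i,i}\le 2C_fp_*+2-\frac{\omega_{\min}}{\epsilon}p_*^2=:g(p_*).
\]

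Since $p_*$ is a maximum of finitely many differentiable functions it is only locally Lipschitz, so I would pass to its upper right Dini derivative, which at each time equals the maximum of $\frac{d}{dt}[P_t]_{i,i}$ over the active indices; the last display then yields $D^+p_*\le g(p_*)$. The function $g$ is a downward parabola, and a short computation shows the stated $\lambda_{\max}$ overestimates the unique positive root of $g$, so that $g(\lambda_{\max})\le 0$ and $g<0$ for $p>\lambda_{\max}$. A standard barrier/comparison argument then gives the invariant-region bound $p_*(t)\le\max\{p_*(0),\lambda_{\max}\}$ for all $t$. For the time-uniform claim on $t>t'_*$ I would exploit that the quadratic dissipation dominates once $p_*$ exceeds $\lambda_{\max}$: bounding $g(u)\le-\frac{\omega_{\min}}{\epsilon}u(u-\lambda_{\max})$ via $g(\lambda_{\max})\le 0$ and comparing against the explicitly solvable $\dot u=-cu^2$ shows $p_*$ is forced below $\lambda_{\max}$ after a time of order $\epsilon/(\omega_{\min}\lambda_{\max})$ \emph{independently of the initial value}, which is of the same $\Theta(\sqrt\epsilon)$ order as the stated $t'_*$. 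The $\Theta(\sqrt\epsilon)$ scalings of $\lambda_{\max}$ and $t'_*$ themselves follow by expanding the square root as $\epsilon\to 0$ with $C_f,\omega_{\min}$ fixed.

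The main obstacle I anticipate is the empirical-variance estimate $V_i\le C_f^2 p_*$: it is what converts the nonlinear, possibly high-dimensional drift into a clean scalar feedback, and it is the single place where Assumption \ref{aspt:short} buys \emph{dimension independence}; getting the weighted Cauchy--Schwarz and the row-sum bookkeeping through \eqref{eq:Cf} right is the heart of the argument. The two remaining subtleties are keeping the localized dissipation term correctly lower-bounded --- which works precisely because the Schur product with $\phi$ only discards nonnegative off-diagonal contributions while leaving the diagonal intact --- and the technical but routine passage to Dini derivatives for the non-smooth maximum $p_*$.
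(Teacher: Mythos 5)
Your proposal is correct and follows essentially the same route as the paper: reduce $\|P_t\|_{\max}$ to the maximal diagonal entry via positive semidefiniteness (Lemma \ref{lem:aux1}), bound the drift term by $C_f p_*$ through Assumption \ref{aspt:short} and Cauchy--Schwarz, use the diagonal-inverse identity and the $k=i$ term of the localized dissipation to arrive at the same scalar Riccati inequality $2C_f p_* + 2 - \frac{\omega_{\min}}{\epsilon}p_*^2$, and close with a comparison principle (the paper's Lemma \ref{lem:boundonderivative} handles the non-smooth maximum via a $\delta_0$-perturbed barrier rather than Dini derivatives, and uses the explicit Riccati solution for the $t>t'_*$ claim, but these are interchangeable with your versions).
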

In \cite{deWiljesReichStannat2018} the bound depends explicitly on $M$ (as the Frobenius norm is used to derive the bound). Here a different route is taken which results in a bound independent of $M$.

To ensure that the filter does not collapse, it is crucial to have a lower bound on the covariance. This comes as a reverse of Lemma \ref{lem:upperPt}. For this purpose, we denote 
\[
\|P_t\|_{\min}=\min\{[P_t]_{i,i},i=1,\ldots,N_x\}. 
\]
It should be noted that $\|P_t\|_{\min}$ is not a norm, and we choose this notation just for its symmetry with $\|P_t\|_{\max}$. 
\begin{lemma}
\label{lem:lowerPt}
Under Assumptions \ref{aspt:short} and \ref{aspt:trans}, suppose $P_t^L$ evolves in time according to (\ref{eq:evolutionP_t}),  the following holds for sufficiently small $\epsilon>0$
\[
\|P_t\|_{\min}\geq  \lambda_{\min}:= \frac{\epsilon}{3\lambda_{\max}\omega_{\max} C_\phi},\quad \forall t>t_*:=\frac{\omega_{\min} \epsilon}{\lambda_{\max}}+3\lambda_{\min}.
\]
\[
\|P_t\|_{\min}\geq \min\left\{\|P_0\|_{\min},\frac{\epsilon}{2\omega_{\min}\max\{\|P_0\|_{\max}, \lambda_{\max}\} } \right\} >0,\quad\forall t>0.
\]
It is clear that when $C_f$ and $\omega_{\min}$ are constants, $\lambda_{\min}(\epsilon)=\Theta(\sqrt{\epsilon}),t_*=\Theta(\sqrt{\epsilon})$. 
\end{lemma}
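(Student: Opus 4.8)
The plan is to reduce the matrix evolution \eqref{eq:evolutionP_t} to a scalar differential inequality for each diagonal entry $p_i(t):=[P_t]_{i,i}$ and then run a barrier (comparison) argument. Reading off the $(i,i)$ entry of \eqref{eq:evolutionP_t} and using the defining property \eqref{eqn:DI} of the diagonal inverse, namely $[P_t^\dagger P_t]_{i,i}=[P_t P_t^\dagger]_{i,i}=1$, the inverse terms collapse to the constant $+2$, so that
\[
\frac{d}{dt}p_i = 2[F_t]_{i,i} + 2 - \frac{1}{\epsilon}[P^L_t\Omega P_t]_{i,i}.
\]
The constant $+2$ is the source that forbids collapse; I would bound the other two terms so that the right-hand side becomes a logistic-type lower comparison with a strictly positive equilibrium of order $\sqrt{\epsilon}$.

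First I would treat the damping term, which is the crux. Since $\Omega$ is diagonal and $[P^L_t]_{i,k}=\phi_{i,k}[P_t]_{i,k}$, it equals $\sum_k \phi_{i,k}\Omega_{k,k}[P_t]_{i,k}^2\ge 0$, so it can only decrease $p_i$. Using the covariance inequality $[P_t]_{i,k}^2\le p_i p_k$, the bound $\Omega_{k,k}\le\omega_{\max}$, the a priori bound $p_k\le\lambda_{\max}$ from Lemma \ref{lem:upperPt}, and the localization row sum \eqref{eq:Cphi}, this is at most $\omega_{\max}\lambda_{\max}C_\phi\,p_i$. The decisive point is that this estimate is linear in $p_i$ with a dimension-independent constant precisely because localization replaces $\sum_k[P_t]_{i,k}^2$ (which would scale with $N_x$) by the finite weighted sum controlled by $C_\phi$. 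For the $F_t$ term I would use Cauchy--Schwarz to get $|[F_t]_{i,i}|\le\sqrt{p_i}\,\mathrm{sd}_i(f)$, where $\mathrm{sd}_i(f)$ is the ensemble standard deviation of $f_i$; writing this variance as a pairwise average and applying Assumption \ref{aspt:short} together with a weighted Cauchy--Schwarz against the weights $\mathcal{F}_{\dist(i,k)}$ bounds it by $C_f\sum_k\mathcal{F}_{\dist(i,k)}p_k\le C_f^2\lambda_{\max}$, again dimension-free via \eqref{eq:Cf}. Hence $|2[F_t]_{i,i}|\le 2C_f\sqrt{\lambda_{\max}}\sqrt{p_i}$.

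Combining the two estimates yields the differential inequality
\[
\frac{d}{dt}p_i \ge 2 - 2C_f\sqrt{\lambda_{\max}}\sqrt{p_i} - \frac{\omega_{\max}\lambda_{\max}C_\phi}{\epsilon}\,p_i,
\]
whose right-hand side I would evaluate at the candidate barrier $p_i=\lambda_{\min}=\frac{\epsilon}{3\lambda_{\max}\omega_{\max}C_\phi}$: there the linear term equals exactly $\tfrac13$, while the sublinear term is $2C_f\sqrt{\lambda_{\max}\lambda_{\min}}=\Theta(\sqrt{\epsilon})\to 0$ and is $\le\tfrac13$ for $\epsilon$ small. Thus $\frac{d}{dt}p_i\ge 2-\tfrac13-\tfrac13>0$ whenever $p_i\le\lambda_{\min}$, which supplies both ingredients of the barrier argument: $\lambda_{\min}$ cannot be crossed from above, and below it each $p_i$ grows at a rate bounded away from zero, so starting from $p_i\ge 0$ it reaches $\lambda_{\min}$ within time $O(\lambda_{\min})$; tracking the constants yields the crossing time $3\lambda_{\min}$ and hence $t_*=t'_*+3\lambda_{\min}$ (with $t'_*$ accounting for when Lemma \ref{lem:upperPt} first supplies $p_k\le\lambda_{\max}$). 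Running the identical comparison from $t=0$ but with the all-time upper bound $\max\{\|P_0\|_{\max},\lambda_{\max}\}$ in place of $\lambda_{\max}$ produces the uniform non-collapse bound, the minimum with $\|P_0\|_{\min}$ reflecting that the barrier only forbids crossing, not starting, below the equilibrium.

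The main obstacle is the damping term: everything hinges on estimating $[P^L_t\Omega P_t]_{i,i}$ by a dimension-independent multiple of $p_i$, which is exactly where the short interaction range and the localization structure must be used in tandem with the a priori bound of Lemma \ref{lem:upperPt}; an unlocalized estimate would reintroduce $N_x$ through $\sum_k[P_t]_{i,k}^2$. A secondary technical point is to make the barrier and crossing-time reasoning rigorous as a genuine comparison principle for the coupled nonautonomous system: the lower bound on $\dot p_i$ depends on the other entries only through the uniform bound $p_k\le\lambda_{\max}$, so the argument decouples componentwise and applies simultaneously to $\min_i p_i$, but this uniformity, and the requirement that $\epsilon$ be small enough for the sublinear term to be dominated, should be stated carefully.
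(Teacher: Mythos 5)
Your proposal is correct and follows essentially the same route as the paper: the same entrywise reduction of \eqref{eq:evolutionP_t} using \eqref{eqn:DI}, the same three bounds (the Cauchy--Schwarz/short-range estimate $|[F_t]_{i,i}|\le C_f\sqrt{[P_t]_{i,i}\|P_t\|_{\max}}$, the constant $+2$ source, and the localization-controlled damping $[P^L_t\Omega P_t]_{i,i}\le \omega_{\max}C_\phi\lambda_{\max}[P_t]_{i,i}$), followed by the same barrier/comparison argument with equilibrium $\lambda_{\min}$ and crossing time of order $\lambda_{\min}$. The only cosmetic difference is that the paper packages the comparison principle as the general Lemma \ref{lem:boundonderivative} (tracking the index attaining the minimum), whereas you argue componentwise and note the decoupling through the uniform bound on $\|P_t\|_{\max}$.
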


Since $P_t^\dagger$ is well defined as long as $\|P_t\|_{\min}>0$,  using the same proof as in Theorem 2.3 of \cite{deWiljesReichStannat2018}, we can show that the l-EnKBF given by \eqref{eq:lEnKBF} has a strong solution:
\begin{cor}
\label{cor:exists}
Suppose the initial ensemble is selected so that $\|P_0\|_{\min}>0$. Then the l-EnKBF filter is well defined for all $t>0$. 
\end{cor}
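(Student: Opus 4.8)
The plan is to treat the coupled system consisting of the signal $X_t$, the observation $Y_t$, and the ensemble $\{X_t^i\}_{i=1}^M$ as a single It\^o SDE driven by the Wiener processes $W_t$ and $B_t$, and to establish a unique global strong solution by combining local existence with the a priori bounds of Lemmas \ref{lem:upperPt} and \ref{lem:lowerPt}.

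First I would identify the domain on which the coefficients of \eqref{eq:lEnKBF} are well behaved. Since $P_t^\dagger$ is obtained by inverting the diagonal entries of $P_t$, the drift and diffusion coefficients are smooth in the ensemble entries (rational, composed with the Lipschitz map $f$) on the open set $\mathcal{D}$ of ensemble configurations for which every diagonal entry $[P_t]_{i,i}$ is strictly positive. On $\mathcal{D}$ the coefficients are locally Lipschitz, so the standard existence-and-uniqueness theorem for SDEs with locally Lipschitz coefficients yields a unique strong solution up to a stopping time $\tau$, which is either an explosion time or the first time the solution reaches the boundary $\partial\mathcal{D}$, that is, the first time some $[P_t]_{i,i}$ hits zero.

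Next I would run a bootstrapping argument to upgrade this to global existence. On the maximal existence interval $[0,\tau)$ the solution exists, so $P_t$ evolves according to \eqref{eq:evolutionP_t} there and both Lemmas \ref{lem:upperPt} and \ref{lem:lowerPt} apply. The second, $t$-independent lower bound of Lemma \ref{lem:lowerPt}, namely $\|P_t\|_{\min}\geq\min\{\|P_0\|_{\min},\frac{\epsilon}{2\omega_{\min}\max\{\|P_0\|_{\max},\lambda_{\max}\}}\}>0$, which uses the hypothesis $\|P_0\|_{\min}>0$, shows that the diagonal entries stay uniformly bounded away from zero, so the solution never approaches $\partial\mathcal{D}$ and $P_t^\dagger$ remains bounded on $[0,\tau)$; meanwhile the upper bound on $\|P_t\|_{\max}$ keeps $P_t$ and $P_t^L$ bounded. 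To rule out explosion I would then control the ensemble itself: the bound on $\|P_t\|_{\max}$ controls the spread $\Delta X_t^i=X_t^i-\overline{X}_t$ through $P_t=\frac1{M-1}\sum_i\Delta X_t^i(\Delta X_t^i)^T$, while the mean $\overline{X}_t$ obeys \eqref{eq:evolutionmean}, whose coefficients are now all bounded, so a Gronwall estimate bounds $\overline{X}_t$ on every finite interval. Since each $X_t^i=\overline{X}_t+\Delta X_t^i$ therefore stays finite as $t\uparrow\tau$, the solution can neither explode nor hit the boundary, forcing $\tau=\infty$.

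The main obstacle is the apparent circularity between existence and the a priori bounds: Lemmas \ref{lem:upperPt} and \ref{lem:lowerPt} are stated conditionally on the existence of the solution, yet those same bounds are what preclude $\tau<\infty$. This is exactly what the bootstrapping resolves, since the bounds are valid on the open interval $[0,\tau)$ on which the solution is already known to exist, and their uniformity there is precisely what contradicts the maximality of a finite $\tau$. Some care is needed to confirm that the constants $\lambda_{\min}$, $\lambda_{\max}$ and the thresholds $t_*$, $t'_*$ do not degenerate, but since they depend only on $C_f$, $C_\phi$, $\omega_{\min}$, $\omega_{\max}$, $\epsilon$ and the fixed initial data, they are genuinely $t$-independent and the argument closes.
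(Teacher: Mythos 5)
Your argument is correct and is essentially the proof the paper intends: the paper disposes of this corollary in one line by citing Theorem 2.3 of \cite{deWiljesReichStannat2018}, whose content is exactly your local-existence-plus-a-priori-bounds bootstrap, with the time-uniform lower bound of Lemma \ref{lem:lowerPt} (valid under $\|P_0\|_{\min}>0$) supplying the key fact that $P_t^\dagger$ stays well defined. The only cosmetic imprecision is calling the coefficients of \eqref{eq:evolutionmean} ``bounded'' when $\overline f_t$ only has linear growth in $\overline{X}_t$, but your Gronwall step is exactly what handles linear growth, so nothing is lost.
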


%The upper bound will first be derived for the $\|P_t\|_{\max}$. Then claim 4) of Lemma \ref{lem:aux1} immediately yields an upper bound for $\|P^L_t\|$.

%\subsection{Wellposedness}
%\begin{theorem} 
%Let the the drift term $f$ of (\ref{eq:model}) be globally Lipschitz continuous and satisfies a linear growth condition
%\begin{equation}
%\|f(x)\| \le \tilde c_1 (1+\|x\|)
%\end{equation}
%for an appropriate $\tilde c_1>0$. 
%Then the filter (\ref{eq:lEnKBF}) closed through equations (\ref{eq:evolutionmean})-(\ref{eq:evolutionP_t}) possesses strong solutions for all times $t\ge 0$. 
%\end{theorem}
%\begin{proof}
%As  (\ref{eq:lEnKBF}) is closed through equations 
%\end{proof}

\subsection{Error analysis in $l_2$ norm}
\label{sec:l2}
As the next step we consider the accuracy of l-EnKBF in terms of the  $l_2$ norm. Since the filter estimate with the ensemble mean, the error is its deviation from the truth, $e_t=X_t-\overline{X}_t$. While it has already been shown in \cite{deWiljesReichStannat2018} $\|e_t\|^2$ is of order $N_x\sqrt{\epsilon}$ through tail probability, our new result extends this estimate to the Laplace transforms. Moreover we show the path-wise maximum has the logarithm scaling with time, indicating the filter is highly stable in terms of error. 
%This scaling is intuitive, since $e_t$ is essentially a stationary process, so $e_t$ and $e_{t+T}$ are close to independent when $T$ is large, 
\begin{theorem}
\label{theo:accuracyl2} 
Let $ e_t=X_t - \overline{X}_t$ be the filter error of l-EnKBF \eqref{sys:lEnKBF}. Under Assumptions \ref{aspt:short} and \ref{aspt:trans}, if $\tilde{\phi}:=\phi-\rho I\succeq \mathbf{0}$ for a constant $\rho>0$, then  for any fixed $t_0>0$ there are strictly positive constants $\epsilon_0,c$ and $C$ such that for every $\epsilon\in (0,\epsilon_0)$, 
\begin{enumerate}[1)]
\item When $t>t_0, \E\|e_t\|^2\leq  C\sqrt{\epsilon} N_x $.
\item For any $0<\lambda<c\epsilon^{-1/2}$,
\[
\limsup_{t\to \infty}\E \exp(\lambda\|e_t\|^2)\leq 2\exp(4C\lambda N_x \sqrt{\epsilon}) .
\]
\item For any $T>t_0$, the following holds
\[
\E_{t_0}\left[\sup_{t_0\leq t\le T}\|e_t\|^2\right]\leq \|e_{t_0}\|^2+ C \sqrt{\epsilon} N_x+C\sqrt{\epsilon} \log (CT/\sqrt{\epsilon}) 
\]
Here $\E_{t_0}$ denotes conditional expectation with respect to information available at time $t_0$. 
\end{enumerate}
\end{theorem}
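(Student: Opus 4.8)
The plan is to first derive a closed stochastic differential equation for $e_t=X_t-\overline{X}_t$. Subtracting \eqref{eq:evolutionmean} from the dynamics of $X_t$ and using $dY_t=X_t\,dt+R\,dB_t$ together with $\overline{X}_t-X_t=-e_t$, I obtain
\[
de_t=\big(f(X_t)-\overline f_t-\epsilon^{-1}P^L_t\Omega e_t\big)dt+\sqrt2\,dW_t-\epsilon^{-1}P^L_t\Omega R\,dB_t.
\]
The crucial observation is that the damping coefficient $\epsilon^{-1}P^L_t\Omega$ is of order $\epsilon^{-1}\cdot\sqrt\epsilon=\epsilon^{-1/2}$, so it should produce a contraction dominating every other term for small $\epsilon$. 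However, $P^L_t\Omega$ is not symmetric and its symmetric part need not be positive definite, so a naive energy estimate in $\|e_t\|^2$ fails. To repair this I would work with the weighted functional $V_t:=e_t^\top\Omega e_t$, for which the damping term becomes $-2\epsilon^{-1}e_t^\top\Omega P^L_t\Omega e_t$ with the manifestly symmetric matrix $\Omega P^L_t\Omega$. The hypothesis $\tilde\phi=\phi-\rho I\succeq\mathbf0$ enters precisely here: writing $P^L_t=P_t\circ\phi=\rho\,\mathrm{diag}(P_t)+P_t\circ\tilde\phi$ and using the Schur product theorem (both $P_t\succeq0$ and $\tilde\phi\succeq0$) gives $P^L_t\succeq\rho\,\mathrm{diag}(P_t)\succeq\rho\lambda_{\min}I$ by Lemma \ref{lem:lowerPt}. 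Hence $\Omega P^L_t\Omega\succeq\rho\lambda_{\min}\omega_{\min}\Omega$, and the damping controls $V_t$ at the rate $\gamma:=2\epsilon^{-1}\rho\lambda_{\min}\omega_{\min}=\Theta(\epsilon^{-1/2})$.

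For part 1) I would apply It\^o's formula to $V_t$ (after a standard localization to make the stochastic integral a genuine martingale). The drift splits into: (i) the damping term above, bounded by $-\gamma V_t$; (ii) the nonlinear term $2e_t^\top\Omega(f(X_t)-\overline f_t)$, which I decompose as $f(X_t)-f(\overline X_t)$ plus $f(\overline X_t)-\overline f_t$, the first being controlled by $C_f$ times $V_t$ via Assumption \ref{aspt:short}, and the second, an ensemble-spread correction of size $\Theta(\sqrt{\lambda_{\max}})$ per component, contributing after Young's inequality a small multiple of $V_t$ plus a constant of order $N_x\sqrt\epsilon$; and (iii) the It\^o trace term $2\,\mathrm{tr}(\Omega)+\epsilon^{-1}\mathrm{tr}(\Omega P^L_t\Omega P^L_t)$, which the bounds $\|P_t\|_{\max}\le\lambda_{\max}$ and $\sum_j\phi_{ij}\le C_\phi$ render of order $N_x$. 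Collecting these yields $\tfrac{d}{dt}\E V_t\le-(\gamma-\Theta(1))\E V_t+\Theta(N_x)$, and since $\gamma=\Theta(\epsilon^{-1/2})$ dominates for small $\epsilon$, Gr\"onwall gives $\E V_t\le C N_x/\gamma=\Theta(N_x\sqrt\epsilon)$ once $t$ exceeds the relaxation time; as the fixed $t_0>0$ exceeds $t_*,t'_*=\Theta(\sqrt\epsilon)$ and the transient $e^{-\gamma(t-t_*)}$ is negligible there, and $V_t\ge\omega_{\min}\|e_t\|^2$, this proves $\E\|e_t\|^2\le C\sqrt\epsilon N_x$.

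For part 2) I would study the exponential moment $\E\exp(\lambda V_t)$. Applying It\^o to $\exp(\lambda V_t)$ produces the extra quadratic-variation term $\tfrac12\lambda^2\exp(\lambda V_t)\,d\langle V\rangle_t$; estimating $d\langle V\rangle_t=4e_t^\top\Omega(2I+\epsilon^{-1}P^L_t\Omega P^L_t)\Omega e_t\,dt$ and using $\|P^L_t\|\le\|P^L_t\|_1\le\lambda_{\max}C_\phi=\Theta(\sqrt\epsilon)$ shows this variation is bounded by a constant multiple of $V_t$. Thus the effective damping rate becomes $\gamma-\Theta(1)-\Theta(\lambda)$, which stays of order $\epsilon^{-1/2}$ exactly when $\lambda<c\epsilon^{-1/2}$ for a small constant $c$, explaining the admissible range. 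With a strictly positive effective rate the drift of $\exp(\lambda V_t)$ is negative whenever $V_t$ exceeds a level of order $N_x\sqrt\epsilon$, and a Foster--Lyapunov argument on $\E\exp(\lambda V_t)$ yields $\limsup_{t\to\infty}\E\exp(\lambda V_t)\le2\exp(\Theta(\lambda N_x\sqrt\epsilon))$; passing from $V_t$ to $\|e_t\|^2\le\omega_{\min}^{-1}V_t$ gives the stated form.

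For part 3) the goal is to bound the running maximum, and this is where I expect the main obstacle. The idea is to combine the exponential supermartingale from part 2) with Doob's maximal inequality. Writing $Z_t=\exp\!\big(\lambda V_t-\int_{t_0}^t\kappa_s\,ds\big)$ for the compensator $\kappa_s$ extracted above, $Z_t$ is a nonnegative supermartingale, so $\Prob_{t_0}\big(\sup_{t_0\le t\le T}(\lambda V_t-\int\kappa)\ge u\big)$ decays like $e^{-u}$; integrating this tail converts the length-$T$ compensator into a \emph{logarithmic} contribution because of the exponential weighting. Choosing the free parameters optimally (in particular a time scale of order $\sqrt\epsilon$ and $\lambda\sim\epsilon^{-1/2}$) turns the $O(T)$ growth of the compensator into the claimed $C\sqrt\epsilon\,\log(CT/\sqrt\epsilon)$ term, with $\|e_{t_0}\|^2$ accounting for the initial value and $C\sqrt\epsilon N_x$ for the stationary fluctuation level. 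The delicate point throughout is to keep every constant independent of $N_x$: this is possible only because $\gamma$, $C_f$ and $C_\phi$ are all dimension free, and verifying that neither the trace terms nor the supremum estimate secretly reintroduces an $N_x$-dependence into the exponential rate is the crux of the argument.
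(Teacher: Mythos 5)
Your treatment of claims 1) and 2) is essentially the paper's argument recast in matrix form: the paper works component-wise, splits $[P_t^L\Omega e_t]_j$ into a diagonal part $\rho[P_t]_{j,j}\Omega_{j,j}[e_t]_j$ plus a $P_t\circ\tilde\phi$ remainder whose quadratic form is nonnegative by the Schur product theorem, and then sums over $j$; your $\Omega$-weighted functional $V_t=e_t^\top\Omega e_t$ together with $P_t^L\succeq\rho\,\mathrm{diag}(P_t)\succeq\rho\lambda_{\min}I$ packages the same positivity mechanism and produces the same rates (damping $\Theta(\epsilon^{-1/2})$, source term $\Theta(N_x)$, quadratic variation bounded by a constant multiple of $V_t$, hence the restriction $\lambda<c\epsilon^{-1/2}$). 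Those two parts are sound and closely parallel to Lemma \ref{lem:auxiliarylemma} and the paper's proof.

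Claim 3) is where your argument has a genuine gap. The supermartingale $Z_t=\exp\big(\lambda V_t-\int_{t_0}^t\kappa_s\,ds\big)$ with a nonnegative compensator $\kappa_s$ gives, via Doob, a tail bound for $\sup_t\big(\lambda V_t-\int_{t_0}^t\kappa_s\,ds\big)$, not for $\sup_t\lambda V_t$; undoing the compensator adds $\int_{t_0}^T\kappa_s\,ds=O(T)$ back \emph{additively to $\lambda V$}, so integrating the tail yields $\E\sup_t V_t\leq V_{t_0}+O(T)/\lambda+O(1)/\lambda$, which is linear in $T$. No choice of parameters converts that additive $O(T)$ inside the exponent into a logarithm. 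The mechanism the paper actually uses is different: since the drift of $\exp(\lambda \|e_t\|^2)$ has the form $(\lambda\beta_*N_x-\tfrac12\lambda\alpha'_*\|e_t\|^2)\exp(\lambda\|e_t\|^2)$ and the function $x\mapsto(a-bx)e^{\lambda x}$ is bounded above by an \emph{absolute constant} $G_*=\Theta(\epsilon^{-1/2})\exp(\Theta(\lambda N_x\sqrt\epsilon))$, Dynkin's formula with the exit time $\tau=\inf\{t\geq t_0:\|e_t\|^2\geq M\}$ gives $\E_{t_0}\exp(\lambda\|e_{T\wedge\tau}\|^2)\leq\exp(\lambda\|e_{t_0}\|^2)+G_*T$; that is, it is the exponential moment of the stopped process that grows only linearly in $T$. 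Markov's inequality then gives $\Prob\big(\sup_{t_0\leq t\leq T}\|e_t\|^2\geq M\big)\leq\big(e^{\lambda\|e_{t_0}\|^2}+G_*T\big)e^{-\lambda M}$, and integrating this tail (Lemma \ref{lem:integral}) produces a term $\tfrac1\lambda\log(G_*T)$, which with $\lambda=\Theta(\epsilon^{-1/2})$ is exactly $C\sqrt\epsilon\log(CT/\sqrt\epsilon)$ plus the $C\sqrt\epsilon N_x$ contribution. You need this constant-drift-bound-plus-Dynkin step (or an equivalent union bound over $O(T/\sqrt\epsilon)$ time windows, each controlled by the stationary exponential moment); as written, your part 3) does not deliver the logarithmic dependence on $T$.
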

Note that the  $\epsilon^{1/2}$ scaling is sharp. This can be understood best if one applies the Kalman-Bucy filter to \eqref{eq:model} with $f(X)=\mathbf{0}$, $H=I_{N_x}$ and $R=\sqrt{\epsilon} I_{N_x}$, the posterior covariance $P_t$ follows the ODE $\frac{d}{dt} P_t=2I_{N_x}-\epsilon^{-1}P_t^2$. It is easy to show that $P_t$ will converge to the limit $P_\infty=\sqrt{2\epsilon}I_{N_x}$, which is of order $\epsilon^{1/2}$ as well.

\subsection{Analysis for component-wise error}
\label{sec:comp}
While Theorem \ref{theo:accuracyl2} provides an estimate $\|e_t\|^2$, the estimate has a scaling of $N_x$ because $\|e_t\|^2$ is the sum of $N_x$ component errors. From Theorem \ref{theo:accuracyl2}, it is impossible to indicate the error of one specific component, or whether this component's error is independent of the dimension $N_x$. This section shows that with a stronger structure assumption on the localization matrix, we can derive dimension-independent bounds for each individual component. 
\begin{aspt}
\label{aspt:diagd} The localization matrix $\phi$ is diagonally dominant. In other words, there is a $q<1$ such that 
\[
\sum_{j\neq i}\phi_{i,j}\leq q. 
\]
Moreover, the interaction between components can be dominated by a constant $C_{\mathcal{F}}$-multiple of the matrix structure $\phi$:
\[
\mathcal{F}_{\dist(i,j)}\leq C_{\mathcal{F}} \phi_{i,j}\quad \forall i,j. 
\]
\end{aspt}

Since $\mathcal{F}_{d}$ usually decays to zero quickly in practice, so $C_{\mathcal{F}}$ are likely to be found.  Using Lemma \ref{lem:norm} it is easy to show $\phi$ satisfying Assumption \ref{aspt:diagd} will have $\phi\succeq (1-q) I$, meaning $\tilde{\phi}=\phi-qI$ is positive semidefinite. In other words, Assumption \ref{aspt:diagd} is stronger than assumption for $\phi$ imposed in Theorem \ref{theo:accuracyl2}. In general,  $\phi$ is not always  diagonally domain. However, this can hold if one choose small localization length $l$. For example, for the  Gaspari--Cohn \cite{sr:gaspari99} distance matrix $\phi$, it will be diagonally dominant if $l\leq 1.4$. In other words, Assumption \ref{aspt:diagd} is likely to hold if the components of model represent spatial information of distant apart. 

With Assumption \ref{aspt:diagd}, we can reproduce Theorem \ref{theo:accuracyl2} type of result for individual component. 
\begin{theorem}
\label{the:individualcomp}
Let $ e_t=X_t - \overline{X}_t$ be the filter error of l-EnKBF \eqref{sys:lEnKBF}. Under Assumptions \ref{aspt:short}, \ref{aspt:trans}, and \ref{aspt:diagd}, for any fixed $t_0>0$ there are  constants $c$ and $C$ such that for sufficiently small $\epsilon>0$, 
\begin{enumerate}[1)]
\item When $t>t_0$, for any index $i$, $\E [[e_t]_i^2]\leq  C\sqrt{\epsilon}$.
\item For any $0<\lambda<c\epsilon^{-1/2}$ and index $i$,
\[
\limsup_{t\to \infty}\E \exp(\lambda [e_t]_i^2)\leq 2\exp(4C\lambda \sqrt{\epsilon}) .
\]
\item For any $T>t_0$, the following holds for all $i$
\[
\E_{t_0}\left[\sup_{t_0\leq t\le T}[e_t]^2_i\right]\leq \max_i\{[e_{t_0}]^2_i\}+ C\sqrt{\epsilon} \log (T/\sqrt{\epsilon}). 
\]
Here $\E_{t_0}$ denotes conditional expectation with respect to information available at time $t_0$. 
\item For any $T>t_0$,
\[
\E_{t_0}\left[\max_i\sup_{t_0\leq t\le T}[e_t]^2_i\right]\leq \max_i\{[e_{t_0}]^2_i\}+ C\sqrt{\epsilon} \log (N_xT/\sqrt{\epsilon}). 
\]
\end{enumerate}
\end{theorem}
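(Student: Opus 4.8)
The plan is to follow the route of Theorem~\ref{theo:accuracyl2}, but to track each scalar error $[e_t]_i$ in isolation and to use the diagonal dominance of Assumption~\ref{aspt:diagd} to prevent the rest of the field from feeding energy into component $i$. Subtracting the mean evolution \eqref{eq:evolutionmean} from the signal dynamics of Assumption~\ref{aspt:trans} and using $dY_t=X_t\,dt+R\,dB_t$, the error $e_t=X_t-\overline X_t$ satisfies $de_t=(f(X_t)-\overline f_t)\,dt-\epsilon^{-1}P^L_t\Omega e_t\,dt+\sqrt2\,dW_t-\epsilon^{-1}P^L_t\Omega R\,dB_t$. Since $\Omega=\epsilon(RR^T)^{-1}$ is diagonal, the instantaneous variance of the $i$-th diffusion is $[2I+\epsilon^{-1}P^L_t\Omega(P^L_t)^T]_{i,i}=2+\epsilon^{-1}[P^L_t\Omega(P^L_t)^T]_{i,i}$, which Lemma~\ref{lem:upperPt} together with $\sum_k\phi_{i,k}^2\le 1+q$ bounds by a constant, since $[P^L_t\Omega(P^L_t)^T]_{i,i}\le\omega_{\max}\lambda_{\max}^2\sum_k\phi_{i,k}^2=\Theta(\epsilon)$. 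Applying It\^o to $[e_t]_i^2$ then produces the drift
\[
2[e_t]_i[f(X_t)-\overline f_t]_i-2\epsilon^{-1}[P_t]_{i,i}\omega_i[e_t]_i^2-2\epsilon^{-1}[e_t]_i\sum_{j\ne i}[P_t]_{i,j}\phi_{i,j}\omega_j[e_t]_j+\Theta(1),
\]
in which the second term, by $[P_t]_{i,i}\ge\lambda_{\min}$ from Lemma~\ref{lem:lowerPt}, supplies a mean reversion of strength at least $2\epsilon^{-1}\lambda_{\min}\omega_{\min}=\Theta(\epsilon^{-1/2})$.

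For part~1 I would reduce everything to this reversion plus controllable coupling. The cross term is estimated with the positive semidefiniteness of $P_t$, i.e.\ $|[P_t]_{i,j}|\le\sqrt{[P_t]_{i,i}[P_t]_{j,j}}$, which after a weighted Young inequality gives $\epsilon^{-1}\omega_{\max}q[P_t]_{i,i}[e_t]_i^2+\epsilon^{-1}\omega_{\max}\sum_{j\ne i}\phi_{i,j}[P_t]_{j,j}[e_t]_j^2$, the factor $q$ coming from $\sum_{j\ne i}\phi_{i,j}\le q$. The forcing from the drift is treated the same way: Assumption~\ref{aspt:short} together with $\mathcal{F}_{\dist(i,j)}\le C_{\mathcal{F}}\phi_{i,j}$ yields $|[f(X_t)-\overline f_t]_i|\le C_{\mathcal{F}}\sum_j\phi_{i,j}|[e_t]_j|+C_{\mathcal{F}}C_\phi\sqrt{\lambda_{\max}}$, where the inhomogeneous part is $\Theta(\epsilon^{1/4})$ because $\tfrac1M\sum_k|[\Delta X^k_t]_j|\le\sqrt{[P_t]_{j,j}}\le\sqrt{\lambda_{\max}}$; its square, after one more Young inequality, feeds a $\Theta(\sqrt\epsilon)$ source and an absorbable multiple of $[e_t]_i^2$. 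Writing $m_i(t)=\E[e_t]_i^2$ I then obtain a coupled linear differential inequality whose coefficient matrix is a Z-matrix with a dominant negative diagonal of order $\epsilon^{-1/2}$ and off-diagonals weighted by $\phi_{i,j}$; the diagonal dominance $q<1$ is precisely what makes it a stable (M-)matrix, so a comparison run on $M(t)=\sup_i m_i(t)$ collapses to the scalar inequality $\dot M\le-\gamma\epsilon^{-1/2}M+C_1$ and delivers $M(t)\le C\sqrt\epsilon$ once $t>t_0$.

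Part~2 is obtained by applying It\^o to $\exp(\lambda[e_t]_i^2)$. Beyond $\lambda$ times the drift above, this introduces the quadratic-variation contribution $2\lambda^2[e_t]_i^2$ times the $\Theta(1)$ diffusion intensity of $[e_t]_i$; balancing the reversion $-\gamma\lambda\epsilon^{-1/2}[e_t]_i^2$ against this $+\Theta(\lambda^2)[e_t]_i^2$ growth is exactly what forces the admissible range $\lambda<c\epsilon^{-1/2}$, under which $\exp(\lambda[e_t]_i^2)$ becomes a supermartingale up to an $O(\lambda\sqrt\epsilon)$ forcing, whence $\limsup_t\E\exp(\lambda[e_t]_i^2)\le 2\exp(4C\lambda\sqrt\epsilon)$. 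The genuine difficulty here is that one cannot take expectations linearly against the neighbour factors $[e_t]_j$ inside the exponential; I would control $\E[\exp(\lambda[e_t]_i^2)[e_t]_i[e_t]_j]$ by a Cauchy--Schwarz/Young splitting that pays a slightly larger exponent $\lambda'>\lambda$, and then close a single self-consistent exponential bound uniformly over all components, with the diagonal dominance again guaranteeing that the neighbour feedback consumes only a fixed fraction of the spectral gap.

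Finally, parts~3 and~4 descend from the exponential estimate by a maximal-inequality argument. The scalar $[e_t]_i^2$ behaves as a mean-reverting process with relaxation rate $\Theta(\epsilon^{-1/2})$ and stationary scale $\Theta(\sqrt\epsilon)$, so I would partition $[t_0,T]$ into $\Theta(T/\sqrt\epsilon)$ intervals of length $\Theta(\sqrt\epsilon)$, bound the within-interval fluctuation by its martingale part, and apply the supermartingale tail of part~2 to a union over the grid; because the exponential tail converts each additional counted event into an additive logarithm, this yields the $\sqrt\epsilon\log(CT/\sqrt\epsilon)$ term of part~3, and extending the union over the $N_x$ components gives the extra $\log N_x$ of part~4. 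I expect the decisive obstacle to be the coupled exponential estimate in part~2: closing the self-consistent bound so that the neighbour feedback stays strictly contractive while the constants remain independent of $N_x$, which is where balancing the conservative covariance bounds of Lemmas~\ref{lem:upperPt}--\ref{lem:lowerPt} against the diagonal-dominance parameter $q$ is most demanding.
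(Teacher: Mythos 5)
Your overall architecture (componentwise It\^o expansion, mean reversion of order $\epsilon^{-1/2}$ from Lemmas \ref{lem:upperPt}--\ref{lem:lowerPt}, exponential moments, then maximal inequalities) matches the paper's, but the step you yourself flag as ``the genuine difficulty'' is a real gap, and the paper resolves it with an idea your proposal does not contain. You propose to apply It\^o to $\exp(\lambda[e_t]_i^2)$ and to handle the neighbour coupling $\E[\exp(\lambda[e_t]_i^2)[e_t]_i[e_t]_j]$ by a Cauchy--Schwarz/Young splitting ``paying a slightly larger exponent $\lambda'>\lambda$'' and then closing a self-consistent bound. This does not close: every such splitting strictly inflates the exponent, there is no mechanism to return to the original $\lambda$, and iterating over the neighbour feedback forces $\E\exp(\lambda'[e_t]_j^2)$ with $\lambda'$ growing without bound. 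The paper's device (Lemma \ref{lem:vexist}) is to first build, for each target index $i$, a weight vector $v^i$ --- the Green's function of a sub-stochastic Markov chain with off-diagonal kernel $\frac1q\phi$, killed at a geometric time --- satisfying $\sum_{l\neq j}\phi_{j,l}v^i_l\leq v^i_j$, $\sum_j v^i_j\leq 1$, and $v^i_i\geq 1-q$. The functional $E^i_t=\sum_j v^i_j[e_t]_j^2$ then obeys a \emph{closed} scalar inequality $dE^i_t\leq(-\alpha_t+C_{\mathcal F})E^i_t\,dt+\beta_t\,dt+d\mathcal M^i_t$ with no residual neighbour terms, because the superharmonicity $\sum_{l\neq j}\phi_{j,l}v^i_l\leq v^i_j$ cancels the coupling term $\epsilon^{-1}\phi_{j,l}P_{l,l}[e_t]_l^2$ against the diagonal reversion $-\epsilon^{-1}P_{l,l}[e_t]_l^2$ \emph{with the same factor $P_{l,l}$ on both sides}. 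Only then is the exponential applied, to $\exp(\lambda E^i_t)$, which is a one-dimensional computation; the component bound is read off at the end from $E^i_t\geq(1-q)[e_t]_i^2$.

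The same mismatch also undermines your part 1. Your reduction of the coupled system for $m_i(t)=\E[e_t]_i^2$ to a scalar inequality for $M(t)=\sup_i m_i(t)$ requires \emph{row} diagonal dominance of the drift matrix, i.e.\ $P_{i,i}\gtrsim\sum_{j\neq i}\phi_{i,j}P_{j,j}$. The assumptions only give $P_{i,i}\geq\lambda_{\min}$ and $\sum_{j\neq i}\phi_{i,j}P_{j,j}\leq q\lambda_{\max}$, and since $\lambda_{\max}/\lambda_{\min}$ is a dimension-free but potentially large constant, $q<1$ does not make this matrix a stable M-matrix in the row sense; the dominance available from Assumption \ref{aspt:diagd} is effectively in the columns, and the weights $v^i$ are exactly the device that converts column dominance into a componentwise (weighted-row) estimate. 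Your parts 3 and 4 (time-grid partition plus union bound) would likely give the right $\log(T/\sqrt\epsilon)$ and $\log(N_xT/\sqrt\epsilon)$ scalings, but they are only sketched; the paper instead bounds the drift of $\exp(\lambda E^i_t)$ by a constant $G_*=\Theta(\epsilon^{-1/2})$, applies Dynkin's formula with the stopping time $\tau_i=\min\{t:E^i_t\geq M\}$ to get $\E_{t_0}\exp(\lambda E^i_{T\wedge\tau_i})\leq\exp(\lambda E^i_{t_0})+G_*T$, and converts the resulting tail bound into an expectation via Lemma \ref{lem:integral}, with the extra $\log N_x$ in part 4 coming from a union bound over $i$.
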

\begin{remark}
If $Z_1,\ldots,Z_n$ are i.i.d. samples of a Gaussian distribution, a rough estimate of $\max_i\{Z_i\}$ is of order $\log n$. And when system has short range interaction, its components are tend to be independent when they are far apart. Likewise, when a system is stationary, it is close to independent with it self in a distance past. The filter error process happens to have both of these two properties. That is why we have the scaling of $\log(N_xT)$ in claim 4). 
\end{remark}

\section{Numerical investigation}\label{sec:Numericalinvestigation}
Lastly the theoretical findings are numerically verified by means of the stochastically perturbed Lorenz 96 system (L96) \cite{sr:lorenz96}. The evolution of each spatial component is given by 
\[
d x_{s}(t)=f_s(X(t))dt+\sqrt{2} dW_s(t)
\] 
for $s\in \{1,\dots,N_x\}$. Here
\begin{equation}\label{eq.L96}
f_s(X(t))=\Big({x}_{s+1}(t)-{ x}_{s-2}(t)\Big){x}_{s-1}(t)-{x}_s(t)+8,
%f({x}_s(t))=\Big({x}_{s+1}(t)-{ x}_{s-2}(t)\Big){ x}_{s-1}(t)-{x}_s(t)+\mathbb{F} \quad \text{ for }s\in \{1,\dots,N_x\}
\end{equation}
and spatial periodicity is assumed, i.e., $x_{-1}(t)=x_{N_x-1}(t)$, $x_0(t)=x_{N}(t)$ and $x_{N_x+1}(t)=x_{1}(t)$. Numerically generated trajectories of \eqref{eq.L96} are typically bounded in the $l_\infty$ norm, i.e.,
\begin{equation}
|x_s(t)| \le C=40
\end{equation}
for all $s$ for the Lorenz 96 system. In other words, the solution of \eqref{eq.L96} is largely indifferent from a soft-truncated version $d X_{s}(t)=\hat{f}_s(X(t))dt+\sqrt{2} dW_s(t) $, where
\begin{equation}\label{eq.L96trunc}
\tilde{f}(x_{s}(t))=1_{\|X(t)\|_\infty\leq C}\Big({x}_{s+1}(t)-{x}_{s-2}(t)\Big){ x}_{s-1}(t)-{x}_s(t).
\end{equation}
 Then note that when $\|X(t)\|_\infty\geq C$, $|\tilde{f}_s(X(t))-\tilde{f}_s(X'(t))|=|{x}'_s(t)-{x}_s(t)|$; when $\|X(t)\|_\infty\leq C$,
\begin{align*}
|\tilde{f}_s(X(t))-\tilde{f}_s(X'(t))|&=|\Big({x}_{s+1}(t)-{ x}_{s-2}(t)\Big){ x}_{s-1}(t)-{x}_s(t)-[\Big({x}'_{s+1}(t)-{ x}'_{s-2}(t)\Big){ x}'_{s-1}(t)-{x}'_s(t)]|\\
&\le|({ x}_{s+1}(t)-{ x}'_{s+1}(t)){ x}_{s-1}(t)|+|({ x}_{s-1}(t)-{ x}'_{s-1}(t)){ x}'_{s+1}(t)|\\
&+|({ x}'_{s-2}(t)-{ x}_{s-2}(t)){ x}'_{s-1}(t)|+|({ x}'_{s-1}(t)-{ x}_{s-1}(t)){ x}_{s-2}(t)| + |{x}'_s(t)-{x}_s(t)|\\
&\le C|{ x}_{s+1}(t)-{ x}'_{s+1}(t)|+C|{ x}_{s-1}(t)-{ x}'_{s-1}(t)|+C|{ x}'_{s-2}(t)-{ x}_{s-2}(t)|\\
&+C|{ x}'_{s-1}(t)-{ x}_{s-1}(t)|+|{x}'_s(t)-{x}_s(t)|.
\end{align*}
Therefore, Assumption \ref{aspt:short} is fulfilled with $\mathcal{F}_{\dist(i,j)}$=0 for $\dist(i,j)>2$ where $\dist(i,j)=\min\{|i-j|, |i+n-j|, |j+n-i|\}$, and (\ref{eq.L96trunc}) has only short range interactions. While we will only simulate \eqref{eq.L96} in below, we expect the associated filter behavior will be similar to the one in \eqref{eq.L96trunc}. Further the entries of the localization matrix $\phi$ are set to 
\begin{equation*}
\phi_{i,j}=\rho\Big(\frac{\dist(i,j)}{l}\Big)
\end{equation*}
using the Gaspari--Cohn function (\ref{eqn:PL}) for $\rho$ and setting the localization radius to $l=1.4$. Note that this choice of localization radius ensures that $\phi$ is diagonally dominant, i.e., Assumption \ref{aspt:diagd} is fulfilled. It is important to note that this choice is not necessarily the \textit{optimal}\footnote{Here optimality can for example be associated with the lowest MSE.} value for the considered system yet the chosen value is sufficient to obtain reasonable MSE values of the expected order. Further we choose the model noise variance to be $\sigma=1$ and the observation operator $H$ to be the identity matrix which is in line with Assumption \ref{aspt:trans}. Three test scenarios are considered to numerically verify the sensitivity of the l-EnKBF with respect to the dimension $N_x$, time interval size $T$ and the measurement error $\epsilon$.
\subsection{Sensitivity with respect to $\epsilon$}
In the first test scheme, the expected filtering error is approximated via a time-averaged MSE for different measurement error values 
\begin{equation*}
\epsilon \in \{0.003125, 0.00625, 0.025, 0.05, 0.1\}.
\end{equation*}
In order to emulate a continuous setting the steps size is chosen to be $dt=10^{-7}$ and the number of steps $10^7$. The dimension of the state space is set to be $N_x=40$, which is a standard choice of the Lorenz 96 model. The l-EnKBF is implemented with $M=10$ ensemble members.
The results are displayed in the left panel of Figure \ref{fig:increasingepsandT}. Note that the MSE is normalised with respect to the dimension, i.e., is divided by $N_x$. The test run confirms that the numerical growth rate with respect to an increasing $\epsilon$ is in line with theoretical order of the expected error derived in claim $1)$ of Theorem \ref{theo:accuracyl2}.
\begin{figure}\label{fig:increasingepsandT}
\begin{center}
\includegraphics[width=0.45\textwidth]{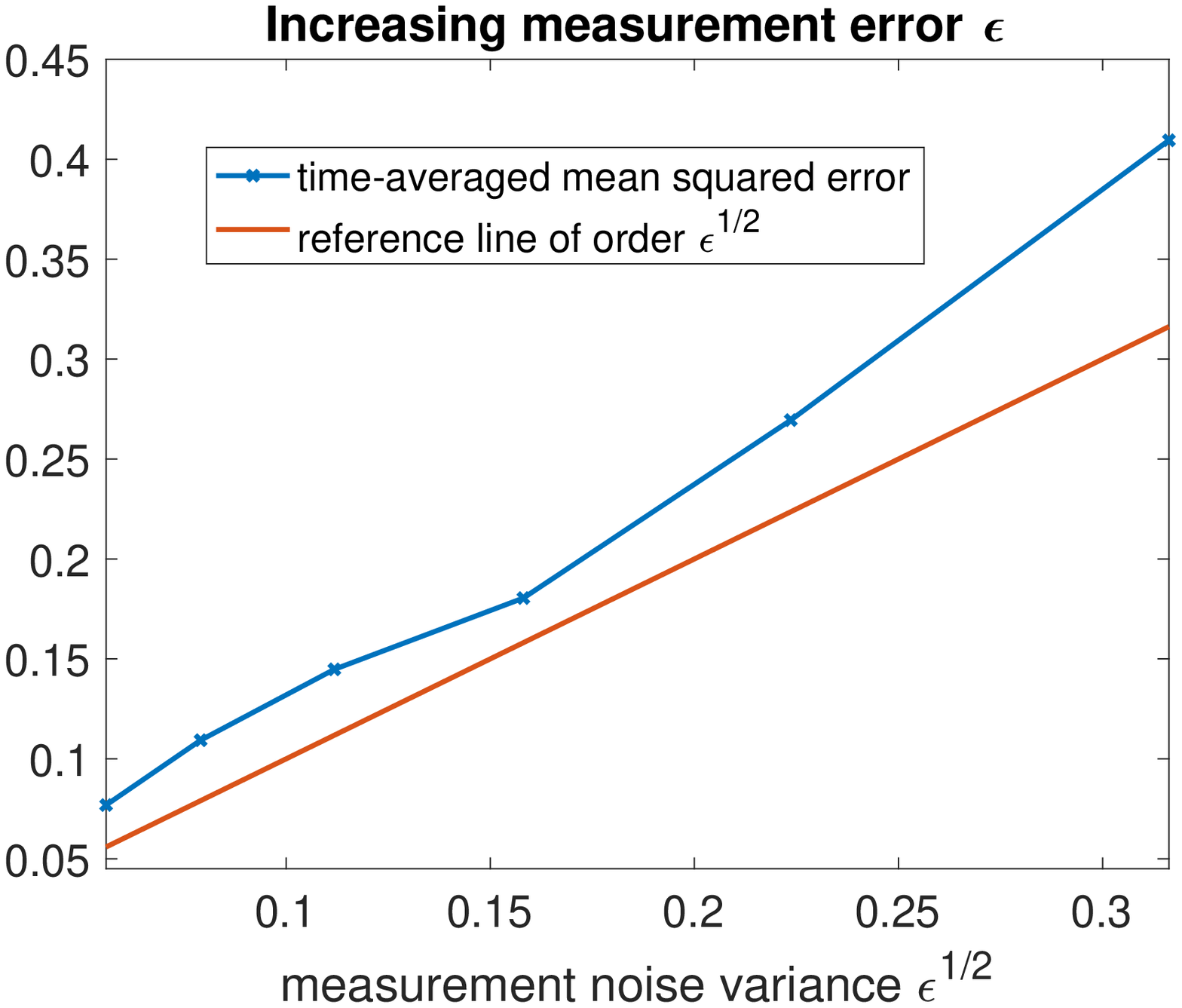} 
\includegraphics[width=0.45\textwidth]{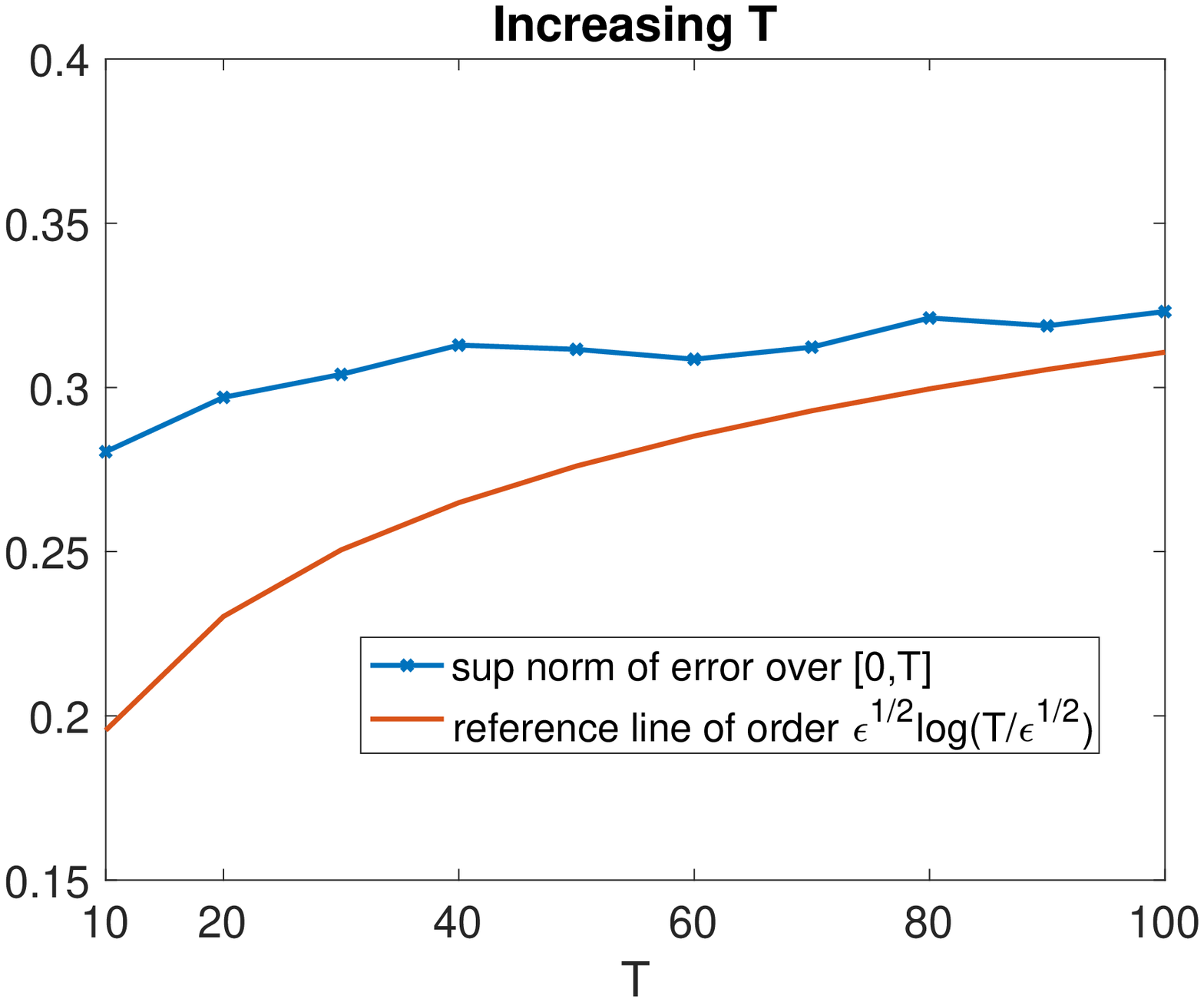} $\quad$
\end{center}
\caption{Time-averaged MSE as a function of the measurement error variance $\epsilon$ is displayed in the left panel. The right panel displays the estimated $\sup_{t\in[0,T]}e_t$ for varying $T$.}
\end{figure}
\subsection{High dimensional testcase}
In the second test scheme, the robustness with respect to state space dimension is investigated. In particular we consider the case where the number of ensemble members $M$ is comparatively small and kept fixed for increasing dimensions. Thus the imbalance between ensemble size and dimension of the state space grows with increasing $N_x$. More precisely we run the filter for $N_x\in\{40, 240, 440,640,840,1040\}$ with $M=10$ and $\epsilon=0.003125$. The resulting time-averaged MSE after $10^6$ steps with step size $dt=10^{-7}$ are displayed in the left panel of Figure \ref{fig:increasingdim}. As state in claim $1)$ of Theorem \ref{theo:accuracyl2} the error grows linearly with $N_x$. Further we numerically verify that the time-averaged error of the individual components, i.e., 
\begin{equation}
\frac{1}{T}\sum^T_{t=1} [e_t]_i^2(t)
\end{equation}
are dimension independent (see right panel of Figure \ref{fig:increasingdim}) as stated in claim $1)$ Theorem \ref{the:individualcomp}. Note that we fixed the considered component of the state vector to be $i=11$ while other index choice produces largely the same results.
\begin{figure}\label{fig:increasingdim}
\begin{center}
\includegraphics[width=0.45\textwidth]{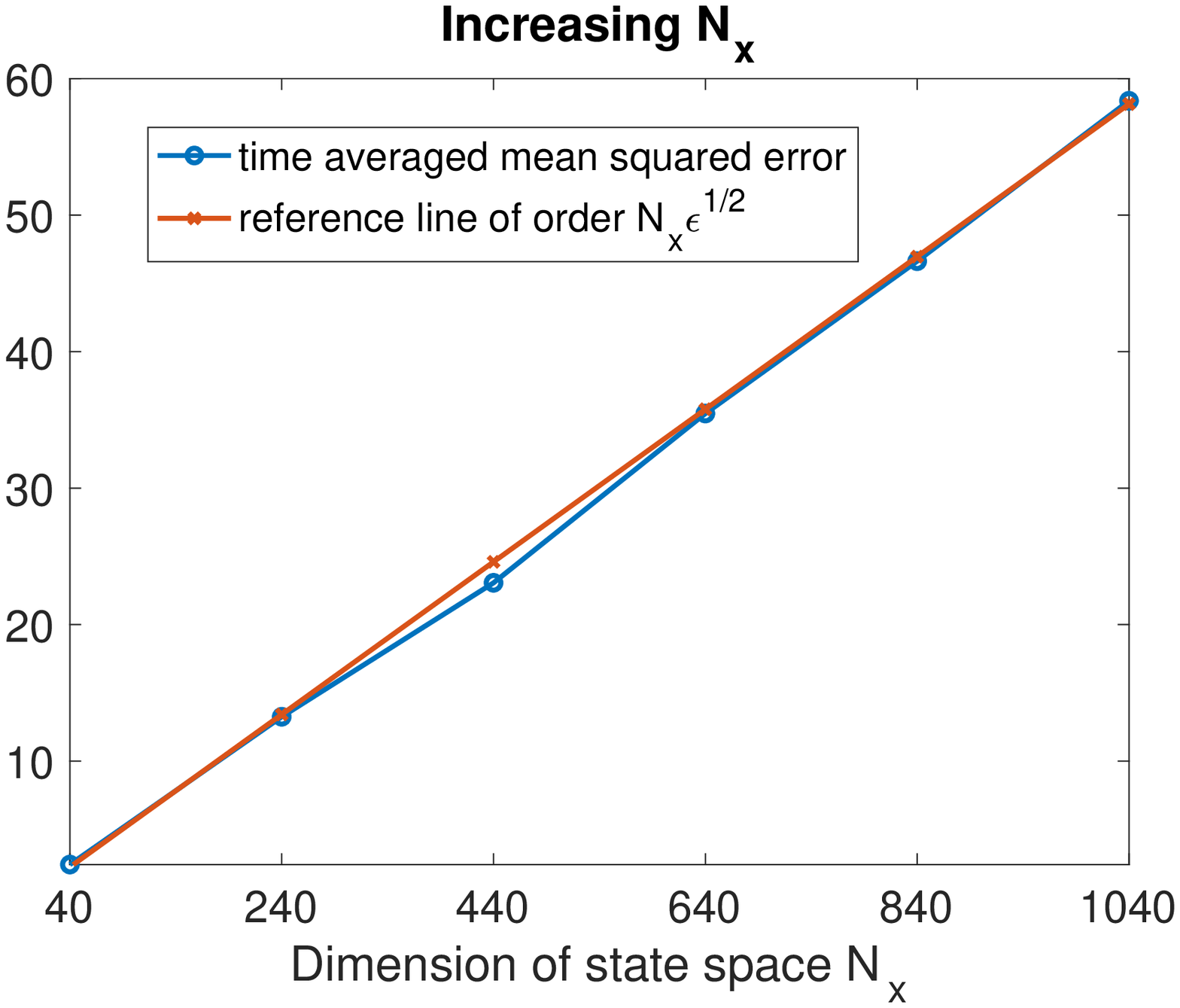} 
\includegraphics[width=0.45\textwidth]{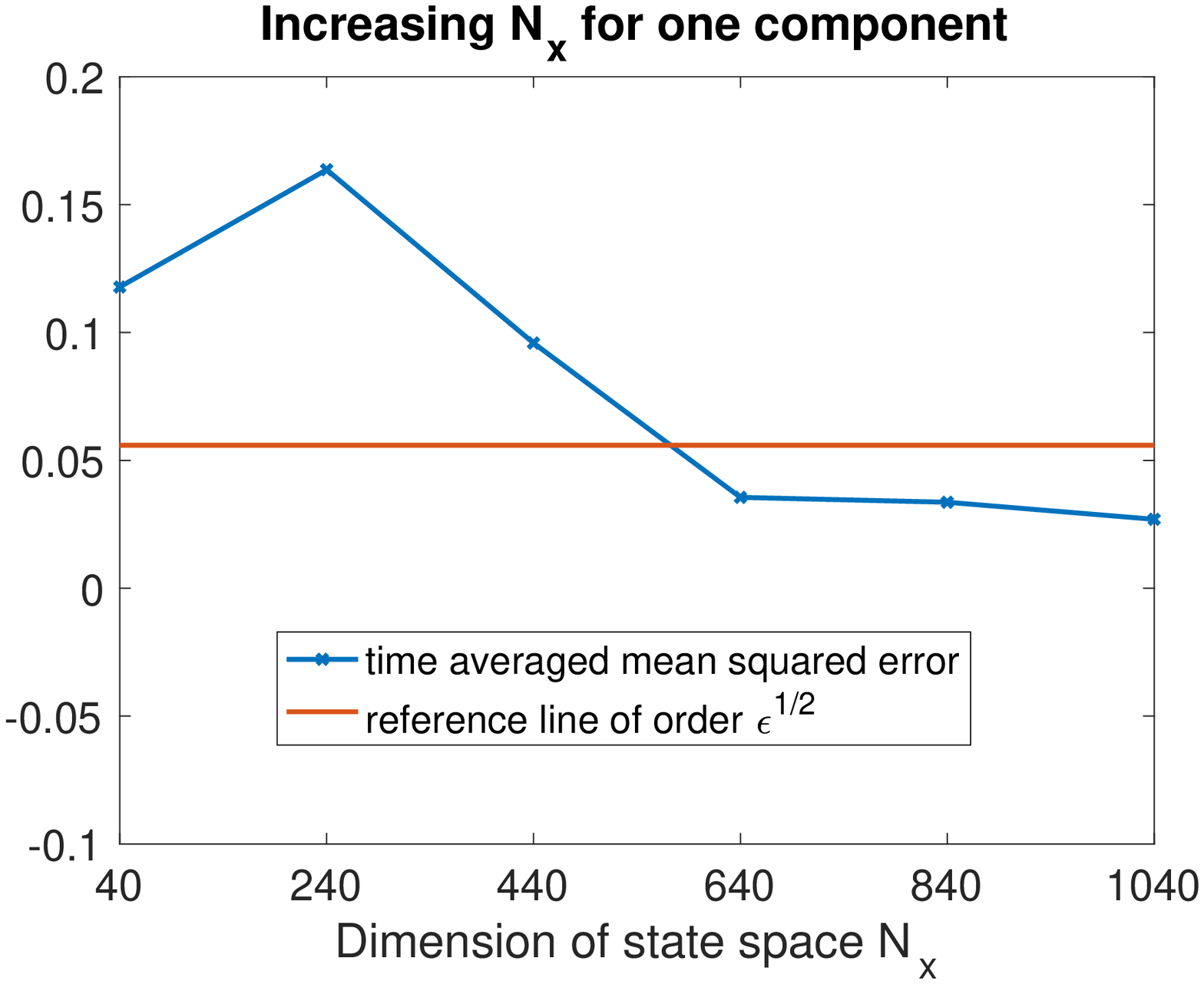}$\quad$
\end{center}
\caption{Time-averaged MSE as a function of the state dimension $N_x$ is displayed in the left panel whereas the time-averaged MSE for one fixed component for increasing $N_x$ is shown in the right panel.}
\end{figure}

\subsection{Uniform error for bounded time interval}
In the final test scheme, we consider a setting with a growing number of steps $10^{6}$ to $10^{7}$ for a fixed step size $dt=10^{-5}$ resulting in filter runs for different time values $T\in\{10,\dots,100\}$. Note that the step size is set to be slightly larger than in the previous examples so that the range of considered $T$ values is more interesting. Further the measurement error variance is set to $\epsilon=0.01$ and the dimension of the state space is $N_x=40$. We simulate the filtering process $30$ times and record the filter error $e^j(t), j=1,\ldots, 30,$ for each simulation. We plot the averaged path-wise $l_2$-square error up to $T$, which is 
\begin{equation*}
\frac{1}{30}\sum^{30}_{j=1} \max_{t\in[0,T]} ||e^j(t)||^2
\end{equation*} 

in the right panel of Figure \ref{fig:increasingepsandT}. The dominating part\footnote{For the considered $N_x$, $\epsilon$ and $T$, the other dominating component $C\sqrt{\epsilon}N_x$ is not large but can of course become significant for $N_x>>0$.} $C\sqrt{\epsilon} \log (CT/\sqrt{\epsilon})$ of the theoretical order of claim $3)$ of Theorem \ref{theo:accuracyl2} is plotted as a reference slope. 

Note that the numerically obtain error is in line with the theoretical order and thus is verifying the logarithmic dependence of the uniform bound on time $T$.

\section{Conclusion}
%
%%%%%%%%%%%%%%%%%%%%%%%%%%%%

In this paper, the earlier derived stability and accuracy results for the EnKBF are extended for systems with $N_x>>M$ via localization. Further the upper bound for the covariance is independent of the number of ensemble members $M$ and the derived path-wise bounds have a better scaling with respect to the time T. Moreover it is shown that the accuracy in the individual components is independent of the state dimension $N_x$ and a Laplace type condition is obtained. Natural extensions include partially observed processes and misspecified drift functions $f(x_t,\lambda)$ with unknown parameter $\lambda$.
Moreover the presented ideas can be used for the analysis of properties of Multilevel Ensemble Kalman Filters \cite{referee2e,referee2f} or of consistent filters, such as the Ensemble Transform Particle Filter \cite{sr:reich13} or the Feedback Particle Filter \cite{sr:meyn13}, for finite number of ensemble members.

\section*{Acknowledgement}
%
%%%%%%%%%%%%%%%%%%%%%%%

The research of J.d.W is funded by Deutsche Forschungsgemeinschaft (DFG) - SFB1294/1 - 318763901 Project (A02) \lq\lq 
Long-time stability and accuracy of ensemble transform filter algorithms\rq\rq. JdW was also supported by ERC Advanced Grant ÒACRCCÓ (grant 339390) and by the Simons CRM Scholar-in-Residence Program. The research of X.T.T is funded by Singapore MOE AcRF Tier 1 funding, grant number R-146-000-292-114. Further the authors would like to thank the anonymous referees for their constructive criticism.

\appendix 

\section{Proof for filter wellposedness and stability}
\subsection{Matrix norms and Riccati equation}
To start, we have several norm inequalities which are utilized in this paper.
\begin{lemma}
\label{lem:norm}
For any $N\times N$ matrix $A$, the following holds
\begin{align}
&\|A\|_{\max} \le \|A\|,\label{eq:maxle2norm}\\
&\|A\| \le \sqrt{\|A\|_1 \|A^T\|_1}\label{eq:2normlestuff}.
\end{align}
\end{lemma}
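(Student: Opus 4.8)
The plan is to treat the two inequalities \eqref{eq:maxle2norm} and \eqref{eq:2normlestuff} separately, since they rely on different elementary facts. Throughout I would write $e_k$ for the $k$-th standard basis vector of the appropriate Euclidean space, so that $\|e_k\|=1$ and an arbitrary entry can be extracted as $A_{i,j}=e_i^T A e_j$.

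For \eqref{eq:maxle2norm}, I would first fix indices $i,j$ attaining the maximum defining $\|A\|_{\max}$. Then, applying the Cauchy--Schwarz inequality followed by the definition of the operator norm,
\[
|A_{i,j}| = |e_i^T A e_j| \le \|e_i\|\,\|A e_j\| \le \|e_i\|\,\|A\|\,\|e_j\| = \|A\|,
\]
where the last equality uses $\|e_i\|=\|e_j\|=1$. Since $i,j$ were the maximizing indices, this already gives the claim.

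For \eqref{eq:2normlestuff}, the starting point is the identity $\|A\|^2=\lambda_{\max}(A^T A)$ coming from the definition of the operator norm. Since $A^T A$ is symmetric and positive semidefinite, its largest eigenvalue coincides with its spectral radius $\rho(A^T A)$. The key step is then to bound the spectral radius by the induced $1$-norm: for any eigenpair $(\lambda,v)$ of a matrix $B$ with $v\neq 0$ one has $|\lambda|\,\|v\|_1=\|Bv\|_1\le \|B\|_1\|v\|_1$, hence $\rho(B)\le\|B\|_1$. Applying this with $B=A^T A$ and then invoking submultiplicativity of the induced $1$-norm,
\[
\|A\|^2=\rho(A^T A)\le \|A^T A\|_1\le \|A^T\|_1\,\|A\|_1,
\]
and taking square roots yields \eqref{eq:2normlestuff}. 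I would note here that $\|A^T\|_1$ is precisely the maximum absolute row sum of $A$, which explains the symmetric appearance of the two factors.

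The computations are routine, so there is no serious obstacle. The only point requiring a little care is the bound $\rho(B)\le\|B\|_1$: it holds because $\|\cdot\|_1$ is an \emph{induced} (hence submultiplicative) operator norm, the maximum column sum, rather than an entrywise sum — it is exactly this property that makes both the submultiplicativity step and the eigenvector estimate above legitimate. I would therefore make sure to state clearly which norm is being used at each stage to avoid conflating the induced $1$-norm with an entrywise one.
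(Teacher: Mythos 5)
Your proof of \eqref{eq:maxle2norm} is essentially identical to the paper's: both extract the entry as $e_i^T A e_j$ and bound it by the operator norm using $\|e_i\|=\|e_j\|=1$.

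For \eqref{eq:2normlestuff} your route differs in a useful way. The paper does not prove this inequality at all; it simply cites Lemma B.2 of an external reference. You instead give a complete, self-contained argument: $\|A\|^2=\lambda_{\max}(A^TA)=\rho(A^TA)$ since $A^TA$ is symmetric positive semidefinite, then $\rho(B)\le\|B\|_1$ via the eigenvector estimate $|\lambda|\,\|v\|_1=\|Bv\|_1\le\|B\|_1\|v\|_1$, and finally submultiplicativity of the induced $1$-norm gives $\|A^TA\|_1\le\|A^T\|_1\|A\|_1$. Each step is correct, and your care in distinguishing the induced (maximum column sum) norm from an entrywise norm is exactly the point where a sloppier argument would fail, since submultiplicativity and the eigenvector bound both require the induced norm. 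The one technicality worth being explicit about is that the eigenvector estimate is legitimate here precisely because $A^TA$ is real symmetric, so the relevant eigenvalue is real with a real eigenvector; for a general real matrix one would have to work over $\mathbb{C}$. Your version buys self-containedness at the cost of a few extra lines, which is arguably preferable to the paper's bare citation.
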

\begin{proof}
Inequality (\ref{eq:maxle2norm}) follows via
\[
\|A\|_{\max}=\max_{i,j}|[A]_{i,j}|=\max_{i,j}|[e_t]_i^TA e_j|\le \|A\|,
\]
where $[e_t]_i$ and $e_j$ are the $i$-th and $j$-th standard Euclidean basis vector. Inequality \eqref{eq:2normlestuff} follows from \cite{MTM19} Lemma B.2.
\end{proof}

\begin{lemma}\label{lem:aux1}
Let  $P$, $Q$ and $\phi$ be positive, symmetric and semidefinite $N_x\times N_x$ matrices and $[\phi]_{i,i}=1$ for all $i$. Then 
\begin{enumerate}[1)]
\item For all $i$, $[(P\circ \phi) Q]_{i,i}=[P(Q\circ \phi)]_{i,i}$.
\item If $P\preceq Q$, then $P\circ \phi\preceq Q\circ\phi$. 
\item  $\|P\circ \phi\|_{\max}=\|P\|_{\max}=\max_{i}\{[P]_{i,i}\}$
\item $\|P \circ \phi\|\le \|P\circ \phi\|_1\leq  C_\phi \|P\|_{\max}$, where $  C_\phi =\max_i\sum_j |\phi_{i,j}|$.
\end{enumerate}

\end{lemma}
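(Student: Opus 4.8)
The plan is to handle the four claims in turn, since each reduces to a short computation once the correct structural fact is identified. For claim 1), I would simply expand both diagonal entries in coordinates and use the symmetry of $\phi$: writing $[(P\circ\phi)Q]_{i,i}=\sum_k P_{i,k}\phi_{i,k}Q_{k,i}$ and $[P(Q\circ\phi)]_{i,i}=\sum_k P_{i,k}Q_{k,i}\phi_{k,i}$, the asserted identity is immediate from $\phi_{i,k}=\phi_{k,i}$.

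For claim 2), I would first rewrite $P\preceq Q$ as $Q-P\succeq\mathbf{0}$ and then invoke the Schur product theorem, namely that the Hadamard product of two positive semidefinite matrices is again positive semidefinite. Applying it to $Q-P\succeq\mathbf{0}$ and $\phi\succeq\mathbf{0}$ yields $(Q-P)\circ\phi=Q\circ\phi-P\circ\phi\succeq\mathbf{0}$, which is exactly $P\circ\phi\preceq Q\circ\phi$. If a self-contained justification of the Schur product theorem is desired, the standard route is to realise $A\circ B$ as a principal submatrix of the Kronecker product $A\otimes B$, which is positive semidefinite whenever $A,B\succeq\mathbf{0}$, and to note that any principal submatrix of a positive semidefinite matrix is positive semidefinite. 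I expect this to be the only claim requiring an external structural fact, and hence the main point to get right.

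For claim 3), the key observation is that for a positive semidefinite matrix the largest entry in magnitude lies on the diagonal: the $2\times2$ principal minor condition $P_{i,i}P_{j,j}-P_{i,j}^2\ge0$ gives $|P_{i,j}|\le\sqrt{P_{i,i}P_{j,j}}\le\max_i P_{i,i}$, so that $\|P\|_{\max}=\max_i P_{i,i}$. The same reasoning applied to $\phi$, which has unit diagonal, gives $|\phi_{i,j}|\le1$. Then on one side $\|P\circ\phi\|_{\max}=\max_{i,j}|P_{i,j}\phi_{i,j}|\le\max_{i,j}|P_{i,j}|=\|P\|_{\max}$, while on the other side the diagonal entries satisfy $[P\circ\phi]_{i,i}=P_{i,i}\phi_{i,i}=P_{i,i}$, forcing $\|P\circ\phi\|_{\max}\ge\max_i P_{i,i}=\|P\|_{\max}$; the two bounds give the claimed chain of equalities.

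Finally, for claim 4), I would first note that $P\circ\phi$ is symmetric, so $\|(P\circ\phi)^T\|_1=\|P\circ\phi\|_1$ and the bound $\|P\circ\phi\|\le\|P\circ\phi\|_1$ follows directly from inequality \eqref{eq:2normlestuff} of Lemma \ref{lem:norm}. For the remaining inequality I would estimate the column sums entrywise using the bound $|P_{i,j}|\le\|P\|_{\max}$ from claim 3), giving $\|P\circ\phi\|_1=\max_j\sum_i|P_{i,j}|\,|\phi_{i,j}|\le\|P\|_{\max}\max_j\sum_i|\phi_{i,j}|$, and the symmetry of $\phi$ identifies the maximal column sum with the maximal row sum $C_\phi$, completing the estimate.
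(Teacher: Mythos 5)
Your proof is correct and follows essentially the same route as the paper: coordinate expansion plus symmetry of $\phi$ for claim 1), reduction to the Schur product theorem for claim 2), diagonal maximality of positive semidefinite matrices for claim 3), and the bound $\|A\|\le\|A\|_1$ for symmetric $A$ together with an entrywise estimate of the column sums for claim 4). The only differences are cosmetic and occur inside standard sub-facts---you justify the Schur product theorem via the Kronecker-product/principal-submatrix realisation where the paper uses the trace identity $u^T(P\circ\phi)u=\operatorname{tr}(P^{1/2}D_u\phi^{1/2}\phi^{1/2}D_uP^{1/2})\ge 0$, and you obtain $|P_{i,j}|\le\sqrt{P_{i,i}P_{j,j}}$ from the $2\times 2$ principal minor where the paper tests the quadratic form against $e_i-e_j$; both variants are equally valid.
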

\begin{proof}[Proof Claim 1)]
Just note that
\[
[(P\circ \phi) Q]_{i,i}=\sum_k P_{i,k}\phi_{i,k}Q_{k,i}=\sum_k P_{i,k}\phi_{k,i}Q_{k,i}= [P (Q\circ \phi)]_{i,i}. 
\]
\end{proof}
\begin{proof}[Proof Claim 2)]
Due to the linearity of the Schur product, it suffices to show that $0\preceq P\circ \phi$. This is known as the Schur product theorem, which can be verified using the following identity, which holds for all $N_x$-dimensional vectors $u$, with $D_u$ being the diagonal matrix where its diagonal entries are the same as $u$:
\[
u^T(P\circ \phi) u=\text{tr}(PD_u \phi D_u)=\text{tr}(P^{\frac12}D_u \phi^{\frac12} \phi^{\frac12} D_u P^{\frac12})\geq 0.
\]
\end{proof}
\begin{proof}[Proof Claim 3)]
Since $P$ is a positive semidefinite matrix for each $i$ and $j$, it follows that
\begin{equation*}
 \langle [e_t]_i-e_j,P([e_t]_i-e_j)  \rangle=\langle [e_t]_i, P [e_t]_i\rangle -2 \langle [e_t]_i, P e_j\rangle+\langle e_j, P e_j\rangle\ge 0,
\end{equation*}
where $[e_t]_i$ and $e_j$ are the $i$ and $j$-th standard Euclidean basis vector. This implies 
\begin{equation*}
2 \langle [e_t]_i, P e_j\rangle \le \langle [e_t]_i, P [e_t]_i\rangle +\langle e_j, P e_j\rangle\le 2\max_{k} [P]_{k,k}.
\end{equation*}
In other words in a positive semidefinite matrix the maximal values are reached on the diagonal.
Note that the Schur product $P\circ \phi$ is a positive semidefinite as well, so it's maximal matrix entries are also assumed on the diagonal. Since $\phi$ is set to $[\phi]_{i,i}=1$ for all $i$ the Schur product does not alter the diagonal entries of $P$ thus $\|P\circ \phi\|_{\max}=\|P\|_{\max}$. 

\end{proof}
\begin{proof}[Proof Claim 4)]
Recall that inequality $(\ref{eq:2normlestuff})$ implies $\|A\|\leq \|A\|_1$ for any symmetric matrix A which yields the first half of claim 4), since $P\circ \phi$ is symmetric. The other half can be obtained by
\begin{equation*}
\|P\circ \phi\|\le\|P\circ \phi\|_1=\max_i\left|\sum_j\phi_{i,j} P_{i,j}\right|\leq \max_i\sum_j|\phi_{i,j}| \|P\|_{\max}= C_\phi  \|P\|_{\max}. 
\end{equation*}
\end{proof}

In this paper, we often concern Riccati type of stochastic equation. In particular, we are often interested in finding bounds for the maximum entry of the solution. To do so, we employ a comparison principle, which generates bounds by comparing with another ODE. In particular, we have the following lemma. %The following lemma is a tool to analyze such quantities. 
\begin{lemma}
\label{lem:boundonderivative}
Suppose $X_t=[x_1(t),\ldots, x_n(t)]$ jointly follows an ODE, $\frac{d}{dt}X_t=F(X_t)$. Let $m_t=\max_{1\leq i\leq n'}\{x_i(t)\}$, where $n'$ can be smaller than $n$. Let $i_t$ be the smallest index $i$ such that $x_i(t)=m_t$. Suppose there is a continuous function $g(x,t)$ such that for any $t\geq 0$, 
\[
\frac{d}{dt}x_{i_t}(t)\leq g(x_{i_t}(t),t).
\]

Suppose $y_t$ satisfies $\frac{d}{dt}y_t=g(y_t,t)+\delta_0$ for a fixed $\delta_0>0$ and $y_0>m_0$, then the following hold
\begin{enumerate}[1)]
\item For all $t>0$,  $y_t>m_t$.
\item Suppose $g(x,t)=g(x)=-\frac{c}{\epsilon}x^2+bx+a-\delta_0$, where $a,b,c$ are constants. Let
\[
\Delta_\epsilon:=2\sqrt{\frac{b^2\epsilon^2}{4c^2}+\frac{a\epsilon}{c}}=\Theta(\sqrt{\epsilon}). 
\]
If $y_0>0$, then $y_t\leq \max\{\Delta_\epsilon, y_0\}$ for all $t>0$.   Moreover, when $t>t_*=\frac{c\epsilon}{\Delta_\epsilon}=\Theta(\sqrt{\epsilon})$, $y_t\leq \Delta_\epsilon$.
\item Suppose $z_t$ is a process such that $0<z_t<D$ for all $t>0$,  and $z_t\leq \Delta_\epsilon$ for $t>t_*$, where $\Delta_\epsilon,t_*$ are positive quantities of order $\Theta(\sqrt{\epsilon})$. Suppose  $g(x,t)=\alpha \sqrt{-x z_t}-\frac{\beta}{\epsilon}  x z_t-\gamma-\delta_0$, where $\alpha,\beta,\gamma$ are all positive constants. Then if $y_0<0$, 
\[
y_t\leq -\min \left\{|y_0|, \frac{\gamma^2}{\alpha^2D}, \frac{\gamma \epsilon}{2\beta D} \right\},\quad \forall t>0. 
\] 
Moreover $|y_t|>c_\epsilon$ for all $t>t_*+\frac{3c_\epsilon}{\gamma}=\Theta(\sqrt{\epsilon})$, where
\[
c_\epsilon:=\min\left\{\frac{\gamma^2}{9\alpha^2\Delta_\epsilon},\frac{\gamma \epsilon}{3\beta \Delta_\epsilon} \right\}=\Theta(\sqrt{\epsilon}).
\]
\end{enumerate}
\end{lemma}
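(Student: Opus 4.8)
The plan is to treat all three claims as scalar comparison and barrier arguments: Claim 1 is a comparison principle linking the vector field $F$ to the scalar ODE for $y_t$, whereas Claims 2 and 3 are self-contained phase-line analyses of the (nearly) autonomous scalar equation $\frac{d}{dt}y_t=g(y_t,t)+\delta_0$. For Claim 1, I would argue by a first-touching-time contradiction, exploiting the strict slack $\delta_0>0$ so as to avoid any appeal to Dini derivatives of the non-smooth maximum $m_t$. Set $\tau=\inf\{t>0:\ y_t\le m_t\}$; since $y_0>m_0$ and both curves are continuous, $\tau>0$ and $y_\tau=m_\tau$. The key device is to \emph{freeze} the maximizing index: with $i_\tau$ the smallest index attaining $m_\tau$, the function $x_{i_\tau}$ is genuinely $C^1$ near $\tau$, one has $y_t>m_t\ge x_{i_\tau}(t)$ for $t<\tau$ close to $\tau$, and $y_\tau=x_{i_\tau}(\tau)$. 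Hence the left derivative of $y_t-x_{i_\tau}(t)$ at $\tau$ is $\le 0$; yet computing it directly and using the hypothesis $\frac{d}{dt}x_{i_\tau}(\tau)\le g(x_{i_\tau}(\tau),\tau)$ together with $y_\tau=x_{i_\tau}(\tau)$ yields a value $\ge\delta_0>0$, a contradiction. This rules out touching and gives $y_t>m_t$ for all $t$.

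For Claim 2 the equation for $y_t$ is the autonomous Riccati ODE $\frac{d}{dt}y_t=-\frac{c}{\epsilon}y_t^2+by_t+a$. I would first locate its positive root $y_+=\frac{b\epsilon}{2c}+\frac12\Delta_\epsilon$ and verify $y_+\le\Delta_\epsilon$, which reduces to $\frac{b\epsilon}{c}\le\Delta_\epsilon$ and follows directly from the definition of $\Delta_\epsilon$. Since the right-hand side is a downward parabola vanishing at $y_+$, it is non-positive at $y=\Delta_\epsilon\ge y_+$, so $\Delta_\epsilon$ is an upper barrier: a trajectory starting below stays below, giving $y_t\le\max\{\Delta_\epsilon,y_0\}$, while a trajectory starting above decreases monotonically. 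To upgrade this to $y_t\le\Delta_\epsilon$ after the short time $t_*=\Theta(\sqrt\epsilon)$, I would bound the descent time uniformly in $y_0$ by splitting it into a quadratically dominated regime ($y$ large, where $\frac{d}{dt}y_t\le-\frac{c}{2\epsilon}y_t^2$ forces a drop to $O(\Delta_\epsilon)$ in time $O(\epsilon/(c\Delta_\epsilon))$ independently of $y_0$) and a near-equilibrium regime in which $\frac{d}{dt}y_t$ is bounded away from zero by $\Theta(\Delta_\epsilon^2/\epsilon)$, so the remaining descent to $\Delta_\epsilon$ again costs only $\Theta(\sqrt\epsilon)$.

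For Claim 3 I would substitute $u_t=-y_t>0$, turning the equation into $\frac{d}{dt}u_t=\gamma-\alpha\sqrt{u_tz_t}-\frac{\beta}{\epsilon}u_tz_t$, a dynamics pushed up by $\gamma$ when $u$ is small and down by the $z$-dependent terms when $u$ is large. The crude lower bound follows from a trapping argument using only $z_t<D$: at the level $u_*=\min\{u_0,\frac{\gamma^2}{\alpha^2D},\frac{\gamma\epsilon}{2\beta D}\}$ the drift $\frac{d}{dt}u_t$ is strictly positive for small $\epsilon$, so $u_t$ can never drop below $u_*$. The sharper bound exploits that $z_t\le\Delta_\epsilon$ once $t>t_*$: the constants defining $c_\epsilon$ are chosen precisely so that whenever $u\le c_\epsilon$ and $z\le\Delta_\epsilon$ each subtracted term is at most $\gamma/3$, whence $\frac{d}{dt}u_t\ge\gamma/3$. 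Therefore, starting from the positive value guaranteed at $t_*$, $u_t$ climbs past $c_\epsilon$ within the stated extra time $3c_\epsilon/\gamma$ and is then trapped above $c_\epsilon$ by the same sign argument.

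I would expect the main obstacle to be the uniform-in-initial-data hitting-time estimate in Claim 2 — forcing the descent from an arbitrarily large $y_0$ down to a barrier of size $\Theta(\sqrt\epsilon)$ within a time that is itself only $\Theta(\sqrt\epsilon)$ — since no single global differential inequality suffices and one must glue the quadratic far-field decay to the linear near-equilibrium decay; the analogous two-phase bookkeeping recurs, in milder form, in Claim 3. By contrast, the comparison step of Claim 1 is the conceptual backbone but becomes routine once the frozen-index device removes the non-differentiability of $m_t$.
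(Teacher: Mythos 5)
Your argument is correct and, for claims 1) and 3), coincides with the paper's own proof: the first-touching-time contradiction that exploits the strict slack $\delta_0$ via the frozen maximizing index is exactly the paper's proof of 1), and your trapping-plus-uniform-drift argument for 3) is the paper's proof read through the substitution $u_t=-y_t$ (the paper works directly with $y_t$ and the observation that $g(x,t)\leq -\gamma/3$ on the strip $0\geq x\geq -c_\epsilon$ once $t>t_*$). The one genuine divergence is claim 2). The paper does not run a two-phase descent estimate; it writes down the explicit solution of the autonomous Riccati equation,
\[
\frac{y_t-y_-}{y_t-y_+}=\exp\Bigl(\tfrac{c}{\epsilon}t\,(y_+-y_-)\Bigr)\,\frac{y_0-y_-}{y_0-y_+},
\]
and reads off both the invariance of the region below $y_+$ and the uniform-in-$y_0$ hitting time directly from the sign and size of this ratio: for $t>t_*$ the exponential factor is bounded below, which forces $y_t$ down to the order of $\Delta_\epsilon$ regardless of how large $y_0$ is. This yields the stated threshold time in one line, with no gluing of regimes. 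Your far-field/near-equilibrium splitting is a sound and arguably more robust substitute (it does not require the right-hand side to be exactly quadratic, so it would survive perturbations), but it delivers the hitting time only up to a constant factor, so strictly speaking it proves the claim with $t_*$ replaced by a constant multiple of $t_*$; since every downstream use of the lemma only needs $t_*=\Theta(\sqrt{\epsilon})$, this is harmless. You correctly identified this uniform hitting-time step as the crux of the lemma.
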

\begin{proof}[Proof Claim 1)]
Let $t_1=\inf\{t>0, y_t\leq m_t\}$. By continuity of $m_t$ and $y_t$, $t_1>0$. Suppose $t_1$ is finite, then $y_{t_1}=m_{t_1}$.  Therefore 
\[
\frac{d}{dt}x_{i_{t_1}}(t_1)\leq g(x_{i_{t_1}}(t),t_1)=g(y_{t_1},t_1)=\frac{d}{dt} y(t_1)-\delta_0.
\]
This indicate for sufficiently small $\delta>0$,
\[
x_{i_{t_1}}(t_1-\delta)> x_{i_{t_1}}(t_1)-\delta g(x_{i_{t_1}}(t),t_1)-\frac12  \delta\delta_0>y(t_1)-\delta g(y(t_1),t_1)+\frac12 \delta\delta_0>y(t_{1}-\delta). 
\]
This contradicts with the definition of $t_1$. Therefore $t_1=\infty$. 
\end{proof}
\begin{proof}[Proof Claim 2)]
First we denote the root of $g(x,t)+\delta_0=0$ as
\[
y_{\pm}=-\frac{b\epsilon}{2c}\pm \sqrt{\frac{b^2\epsilon^2}{4c^2}+\frac{a\epsilon}{c}}. 
\]
It is easy to check that $y_+>0>y_-$, while $\Delta_\epsilon=y_+-y_-$. Note that $g(x)+\delta_0 \leq 0$ when $y_t\geq \Delta_\epsilon\geq y_+$. So $y_t$ is decreasing when $y_t$ is above $\Delta_\epsilon$.

Next note that $y_t$ is the solution of a Riccati differential equation. The solution to the Riccati ODE is given by  has the explicit formulation 
\begin{equation}
\label{tmp:Ricc}
\frac{y_t-y_-}{y_t-y_+}=\exp\left(\frac{c}{\epsilon}t (y_+-y_-)\right)\left(\frac{y_0-y_-}{y_0-y_+}\right)
\end{equation}
If $y_0<y_+$, it is easy to check that \eqref{tmp:Ricc} always take negative value, meaning $y_t<y_+<y_+-y_-=\Delta_\epsilon$ for all $t>0$. When $y_0>y_+$ and $t>t_*$, from \eqref{tmp:Ricc} leads to 
\[
\frac{y_t-y_-}{y_t-y_+}\geq \exp\left(\frac{c}{\epsilon}t (y_+-y_-)\right)\geq 2,
\]
so $y_t\leq y_+-y_-=2\sqrt{\frac{b^2\epsilon^2}{4c^2}+\frac{a\epsilon}{c}}=\Theta(\sqrt{\epsilon})$. 
\end{proof}
\begin{proof}[Proof Claim 3)]
Note that $g(x,t)+\delta_0<0$ when $|x|<\min \left\{\frac{\gamma^2}{\alpha^2D}, \frac{\gamma \epsilon}{2\beta D} \right\}$, so $y_t$ will be decreasing if it is above $-\min\left\{\frac{\gamma^2}{\alpha^2D}, \frac{\gamma \epsilon}{2\beta D} \right\}$. This leads to the first part of the claim.

Next note that when $0\geq x\geq -c_\epsilon$ and $t>t_*$, $g(x,t)\leq -\frac\gamma 3$. So $y_t$ will be decreasing with rate at least $\frac{\gamma}{3}$ when $0\geq y_t\geq -c_\epsilon$ and $t>t_*$, this leads to our claim. 

\end{proof}

\subsection{Upper bounds for sample covariance}

\begin{proof}[Proof of Lemma \ref{lem:upperPt}]
Recall that $P_t$ is positive semidefinite, therefore by Lemma \ref{lem:aux1}
\[
\|P_t\|_{\max}=\max\{[P_t]_{i,i},i=1,\ldots,N_x\},
\]
where the components of $P_t$ follows an ODE (\ref{eq:evolutionP_t}). Therefore, in order to  apply Lemma \ref{lem:boundonderivative} Claim 1), it suffices to investigate the ODE deriving the component with the maximal value. Suppose at time $t$, $[P_t]_{k,k}=\|P_t\|_{\max} $ for certain $k$. Considering the time evolution of $[P^L_t]_{k,k}$ given by (\ref{eq:evolutionP_t}), it is given by
\begin{equation}\label{eq:evolutionofPtLidealizedsetting}
\frac{d}{dt} [P_t]_{k,k}= [F_t+F_t^T]_{k,k}+[P_t^\dagger P_t+P_t P_t^\dagger ]_{k,k} -\frac{1}{2\epsilon} [P^L_t \Omega P_t+P_t \Omega P^L_t ]_{k,k}. 
\end{equation}
First note that $[F_t ]_{k,k}=\frac1{M-1}\sum_{i} (x^i_k-\overline{x}_k)(f_k(X^i_t)-\overline f_k)$, where by Assumption \ref{aspt:short} we have
\[
|f_k(X^i)-\overline f_k|\leq \frac{1}{M}\sum_{j=1}^M |f_k(X^i_t)-f_k (X^j_t)|\leq \frac1M \sum_{j=1}^M \sum_{l=1}^{N_x}\mathcal{F}_{\dist(k,l)} |x^i_l-x^j_l|.
\]
This leads to
\begin{align}
\notag
[F_t ]_{k,k}&\leq \frac1{M-1}\sum_{i=1}^M |x^i_k-\overline{x}_k\|f_k(X_t^i)-\overline f_k| \\
\notag
&\leq  \frac1{(M-1)M}\sum_{i,j,l,m} \mathcal{F}_{\dist(k,l)}  |x^i_k-x^{m}_k||x^i_l-x^j_l|\\
\notag
&\leq  \sum_{l=1}^{N_x} \mathcal{F}_{\dist(k,l)}  \sqrt{\frac{\sum_{i,m} |x^i_k-x^{m}_k|^2}{M(M-1)}}\sqrt{\frac{\sum_{i,j}|x^i_l-x^j_l|^2}{M(M-1)}}\\
&\leq  \sum_{l=1}^{N_x} \mathcal{F}_{\dist(k,l)}[P_t]_{k,k}\leq C_f [P_t]_{k,k} \label{eq:ineqC_fP_t}.
\end{align}
%Also, note that $P_t^\dagger P_t$ is a projection onto the the column space of $P_t$, so 
%\[
%\|CP_t^\dagger P_t\|_\infty=1\quad [C P_t^\dagger P_t]_{k,k}\leq \|C P_t^\dagger P_t\|_\infty\leq \|C\|.
%\]
Also, note that $[P_t^\dagger P_t]_{k,k}=[P_t^\dagger]_{k,k}[P_t]_{k,k}=1$ due to Defintion \ref{defn:DI}, so 
\begin{equation}\label{eq:secondterm}
[ P_t^\dagger P_t]_{k,k}=1. 
\end{equation}
Lastly, we have 
\begin{equation}
\label{eq:thirdterm}
[P_t^{L}\Omega P_t]_{k,k}=\sum_{i=1}^{N_x}[P_t^{L}]_{k,i}\Omega_{i,i}[P_t]_{i,k}=\sum_{i=1}^{N_x}[P_t]^2_{k,i}\phi_{i,k}\Omega_{i,i}\geq \Omega_{k,k}[P_t]^2_{k,k}\geq \omega_{\min}[P_t]^2_{k,k}. 
\end{equation}
Insert \eqref{eq:ineqC_fP_t},\eqref{eq:secondterm} and \eqref{eq:thirdterm} to \eqref{eq:evolutionofPtLidealizedsetting}, we find
\[
\frac{d}{dt} [P_t]_{k,k}\leq 2C_f [P_t]_{k,k}+2 -\frac{\omega_{\min}}{\epsilon}[P_t]_{k,k}^2. 
\]
Therefore Lemma \ref{lem:boundonderivative} claim 1) applies with 
\[
g(x,t)=2C_f x+2-\frac{\omega_{\min}}{\epsilon}x^2. 
\]
Let $\delta_0=1$, Lemma \ref{lem:boundonderivative} claim 2) yields the result of this lemma.
\end{proof}

%\begin{equation} 
%\label{ControlDifferentialL2NormLowerBoundGeneral} 
%\frac 12 \frac{{\rm d}V_t}{{\rm d}t} \ge L_- V_t + \lambda^{\rm min}(D) 
%- \frac{\lambda^{\max} (\epsilon^{-1})}{2} V_t^2 \, ,
%\end{equation}
\subsection{Lower bounds for sample covariance}

\begin{proof} [Proof of Lemma \ref{lem:lowerPt}]
The proof is similar to the one of Lemma \ref{lem:upperPt}, but we need to change sign, because
\[
-\|P_t\|_{\min}=\max\{-[P_t]_{i,i},i=1,\ldots,N_x\}.
\]
By Lemma \ref{lem:boundonderivative} claim 1), we assume at time $t$,  $\|P_t\|_{\min}=[P_t]_{k,k}$ and investigate the ODE that $-[P_t]_{k,k}$ follows. It is given by 
the inverse of \eqref{eq:evolutionofPtLidealizedsetting}. Following same procedures prior to  (\ref{eq:ineqC_fP_t}), we have
\begin{equation}
\label{tmp:FLPt}
[F_t ]_{k,k}\geq -\sum_{l=1}^{N_x} \mathcal{F}_{\dist(k,l)}  \sqrt{\frac{\sum_{i,m} |x^i_k-x^{m}_k|^2}{M(M-1)}}\sqrt{\frac{\sum_{i,j}|x^i_l-x^j_l|^2}{M(M-1)}}
\geq -C_f \sqrt{[P_t]_{k,k}\|P_t\|_{\max}}
\end{equation}
\eqref{eq:secondterm} remains the same. Finally recall that in \eqref{eq:thirdterm}, we have
\begin{equation}\label{eq:approximationlasttermlowerbound}
[P_t^{L}\Omega P_t]_{k,k}=\sum_{i=1}^{N_x}[P_t]^2_{k,i}\phi_{i,k}\Omega_{i,i}\leq \sum_{i=1}^{N_x} \omega_{\max} \phi_{i,k}[P_t]_{k,k}\|P_t\|_{\max}\leq  \omega_{\max}C_\phi [P_t]_{k,k}\|P_t\|_{\max}. 
\end{equation}
Insert  (\ref{tmp:FLPt}), (\ref{eq:secondterm}), and (\ref{eq:approximationlasttermlowerbound}) into \eqref{eq:evolutionofPtLidealizedsetting}, we find
\[
 -\frac{d}{dt}[P_t]_{k,k}\leq 2C_f \sqrt{-(-[P_t]_{k,k})\|P_t\|_{\max}}+ \frac{\omega_{\max}}{\epsilon}C_\phi \|P_t\|_{\max} [P_t]_{k,k}-2. 
\]
So we can apply Lemma \ref{lem:boundonderivative} claim 3) with $\delta_0=1$ and 
\[
g(x,t)=2C_f \sqrt{-x\|P_t\|_{\max}}+ \frac{\omega_{\max}}{\epsilon}C_\phi x [P_t]_{k,k}-2.
\]
This gives us the claimed result. 
\end{proof}

\section{Proof for filter error analysis in $l_2$ norm }
\subsection{Evolution of component-wise error}
Before we prove the statements of Theorem \ref{theo:accuracyl2} we consider the following auxiliary lemma which will be used several times throughout the remainder of the paper.
\begin{lemma}\label{lem:auxiliarylemma}
Let  $[e_t]_j$ be the $j$-th component of the filter error $e_t$. Then the following holds 
\begin{align}
\label{eq:derrorsquaredbound}
d[e_t]^2_j\leq &\Big(-\alpha_t [e_t]_j^2-2\epsilon^{-1}\sum_{i=1}^j[P_t\circ \tilde{\phi}]_{j,i} [e_t]_i[e_t]_j+\sum_{i\neq j} \calF_{\dist(i,j)}|[e_t]_i|^2+\beta_t\Big)dt+d[\calM_t]_j.
\end{align}
In \eqref{eq:derrorsquaredbound},  $\calM_t$ is a $N_x$ dimensional martingale with components being
\[ 
d[\calM_t]_j=2\sqrt{2}[e_t]_jdW_j -2\epsilon^{-1/2}[e_t]_j[P^L_t \Omega^{1/2} dB]_j. 
\]
In \eqref{eq:derrorsquaredbound}, $\alpha_t$ and $\beta_t$ are two real valued processes given by  
\begin{align*}
\alpha_t&:=2\epsilon^{-1}\rho\|P_t\|_{\min}-C_f-1,\\
\beta_t &:=C^2_f\|P_t\|_{\max}+2 +\epsilon^{-1} C_\phi ^2 \omega_{\max} \|P_t\|^2_{\max}.
\end{align*}
By Lemmas \ref{lem:upperPt} and \ref{lem:lowerPt}, the following holds for all $t\geq 0$
\[
\alpha_t\geq \alpha^*=-C_f-1,
\]
\[
\beta_t\leq \beta^*:=C^2_f\max\{\|P_0\|_{\max},\lambda_{\max}\}+2 +\epsilon^{-1} C_\phi ^2\omega_{\max} \max\{\|P_0\|^2_{\max},\lambda^2_{\max}\}=\Theta(\epsilon^{-1}). 
\]
When $t\geq t_*$, these bounds can be further improved to
\begin{align*}
\alpha_t\geq \alpha_*&:=2\epsilon^{-1}\rho\lambda_{\min}-C_f-1=\mathcal{O}(\epsilon^{-\frac{1}{2}}),\\
\beta_t\leq \beta_* &:=C^2_f\lambda_{\max}+2 +\epsilon^{-1} C_\phi ^2 \omega_{\max} \lambda^2_{\max}=\Theta(1). 
%2C_f+1 -\rho(4C_f))
\end{align*}

\end{lemma}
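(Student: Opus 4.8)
The plan is to derive the Itô dynamics of each squared error component $[e_t]_j^2$ and then group the resulting terms into a coercive (dissipative) part, a short-range interaction part, and a bounded ``noise $+$ forcing'' remainder that we identify as $\beta_t$.

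First I would write down the SDE solved by $e_t=X_t-\overline X_t$. Subtracting the mean evolution \eqref{eq:evolutionmean} from the signal dynamics of Assumption \ref{aspt:trans}, and using $dY_t=X_t\,dt+R\,dB_t$ so that $\overline X_t\,dt-dY_t=-e_t\,dt-R\,dB_t$, gives
\[
de_t=\bigl(f(X_t)-\overline f_t\bigr)\,dt-\epsilon^{-1}P^L_t\Omega e_t\,dt+\sqrt2\,dW_t-\epsilon^{-1}P^L_t\Omega R\,dB_t.
\]
Since only $RR^\top=\epsilon\Omega^{-1}$ enters the dynamics, I may take $R=\sqrt\epsilon\,\Omega^{-1/2}$ without loss, so the $B$-driven diffusion coefficient becomes $-\epsilon^{-1/2}P^L_t\Omega^{1/2}$; applying Itô's formula to $[e_t]_j^2$ then produces exactly the martingale $[\calM_t]_j$ recorded in the statement, its martingale part being $2[e_t]_j$ times the diffusion of $[e_t]_j$. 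The remaining work is to control the drift of $d[e_t]_j^2$, namely $2[e_t]_j\bigl([f(X_t)-\overline f_t]_j-\epsilon^{-1}[P^L_t\Omega e_t]_j\bigr)+\tfrac{d}{dt}\langle[e_t]_j\rangle$.

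Next I would dispose of the easier pieces. The quadratic variation splits into $2\,dt$ from $\sqrt2\,dW_j$ and $\epsilon^{-1}[P^L_t\Omega P^L_t]_{j,j}\,dt$ from the $B$-term; bounding $\Omega\preceq\omega_{\max}I$, $|[P_t]_{j,i}|\le\|P_t\|_{\max}$ and $\sum_i\phi_{j,i}^2\le C_\phi^2$ yields $[P^L_t\Omega P^L_t]_{j,j}\le\omega_{\max}C_\phi^2\|P_t\|_{\max}^2$, accounting for the $+2$ and the $\epsilon^{-1}C_\phi^2\omega_{\max}\|P_t\|_{\max}^2$ summands of $\beta_t$. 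For the coercive term I would use the decomposition $\phi=\rho I+\tilde\phi$ with $\tilde\phi\succeq\mathbf{0}$: the diagonal piece gives $-2\epsilon^{-1}\rho[P_t]_{j,j}\Omega_{j,j}[e_t]_j^2$, and using $[P_t]_{j,j}\ge\|P_t\|_{\min}$ produces the coercive coefficient $2\epsilon^{-1}\rho\|P_t\|_{\min}$ in $\alpha_t$, while the off-diagonal piece is collected, via the symmetry of $P_t\circ\tilde\phi$, into the cross term $-2\epsilon^{-1}\sum_{i=1}^{j}[P_t\circ\tilde\phi]_{j,i}[e_t]_i[e_t]_j$.

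The main obstacle is the forcing/interaction term $2[e_t]_j[f(X_t)-\overline f_t]_j$. Here I would write $f(X_t)-\overline f_t=\tfrac1M\sum_i(f(X_t)-f(X^i_t))$ and apply Assumption \ref{aspt:short} together with the splitting $[X_t]_l-[X^i_t]_l=[e_t]_l-[\Delta X^i_t]_l$; the spread part is controlled by $\tfrac1M\sum_i[\Delta X^i_t]_l^2\le\|P_t\|_{\max}$ through Cauchy–Schwarz, and a Young's inequality $2ab\le a^2+b^2$ on the product with $[e_t]_j$ then separates this into the constant $C_f^2\|P_t\|_{\max}$ (the first summand of $\beta_t$), a self term supplying the $-(C_f+1)$ in $\alpha_t$, and the weighted interaction $\sum_{i\neq j}\calF_{\dist(i,j)}[e_t]_i^2$. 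Making the coefficients land precisely on $C_f$ and on the stated interaction sum, rather than on a crude multiple, is the delicate bookkeeping, since the constants $\calF_{\dist(j,l)}$ must be distributed carefully between the diagonal and off-diagonal error contributions. Finally, the asserted bounds on $\alpha_t,\beta_t$ follow by substitution: $\|P_t\|_{\min}\ge0$ gives $\alpha_t\ge-C_f-1$ for all $t$, Lemma \ref{lem:upperPt} bounds $\|P_t\|_{\max}$ uniformly, and for $t\ge t_*$ Lemmas \ref{lem:upperPt} and \ref{lem:lowerPt} give $\|P_t\|_{\max}\le\lambda_{\max}$ and $\|P_t\|_{\min}\ge\lambda_{\min}$, whence the scalings $\lambda_{\max}^2=\Theta(\epsilon)$ and $\lambda_{\min}=\Theta(\sqrt\epsilon)$ yield $\alpha_*=\mathcal{O}(\epsilon^{-1/2})$ and $\beta_*=\Theta(1)$.
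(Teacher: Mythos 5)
Your proposal is correct and follows essentially the same route as the paper's proof: the same error SDE, the same It\^o computation, the same $\phi=\rho I+\tilde\phi$ decomposition of the gain term, the same Cauchy--Schwarz/Young treatment of the forcing term, and the same bound on the quadratic variation. The only cosmetic difference is that you split the argument $X_t-X^i_t=e_t-\Delta X^i_t$ inside the Lipschitz bound, whereas the paper inserts $f_j(\overline{X}_t)$ as an intermediate point; under Assumption \ref{aspt:short} these yield identical estimates.
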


\begin{proof}
Recall the evolution of $X_t$ and $\overline{X}_t$ are given by $dX_t= f(X_t)dt+\sqrt{2}dW_t$ and
\[
d\overline{X}_t=\overline f_tdt- \epsilon^{-1}P^L_t \Omega ( \overline{X}_t dt-dY_t)=\overline f_tdt- \epsilon^{-1}P^L_t \Omega ( \overline{X}_t dt-X_t dt-\sqrt{\epsilon}\Omega^{-1/2}dB_t).
\]
The evolution of the error $e_t=X_t-\overline{X}_t$ is given by the difference between the two, namely
\[
de_t=(f(X_t)-\overline f_t-\epsilon^{-1}P^L_t\Omega e_t) dt+\sqrt{2}dW_t-\epsilon^{-1/2}P_t^L\Omega^{1/2}dB_t. 
\]
%In the following we will drop the $t$.
The $j$-th component of this differential equation is given by 
\[
d[e_t]_j=(f_j(X_t)-\overline f_j-\epsilon^{-1}[P_t^L \Omega e_t]_j)dt +\sqrt{2}dW_j -\epsilon^{-1/2}[P^L_t \Omega^{1/2} dB]_j,
\]
where $\overline{f}_j$ denotes the j-th component of  $\overline f_t$. 
Ito's formula implies that 
\begin{align}
\notag
d[e_t]^2_j=&\left(2(f_j(X_t)-\overline f_j-\epsilon^{-1}[P_t^L  \Omega e_t]_j)[e_t]_j+2 +\epsilon^{-1}[P^L_t\Omega P^L_t]_{jj}\right)dt\\
\label{tmp:ej}
& +2\sqrt{2} [e_t]_jdW_j -\epsilon^{-1/2}2[e_t]_j[P^L_t \Omega^{1/2}dB]_j . 
\end{align}
To continue, note that
\begin{align}
\notag
|f_j(X_t)-\overline f_j||[e_t]_j|&=\left|f_j(X_t)-\frac{1}{M} \sum_{i=1}^M f_j(X^i_t)\right| \,|[e_t]_j|\\
\notag
&\leq\frac{1}{M} \sum_{i=1}^M|f_j(X_t)-  f_j(X^i_t)\|[e_t]_j|\\
\label{tmp:ej1}
&\leq \frac{1}{M} \sum_{i=1}^M|f_j(\overline{X}_t)-  f_j(X^i_t)\|[e_t]_j|+|f_j(X_t)-  f_j(\overline{X}_t)\|[e_t]_j|.
\end{align}
By Assumption \ref{aspt:short}, the second part of \eqref{tmp:ej1} can be bounded easily by 
\begin{equation}\label{eq:absch2}
|f_j(X_t)-  f_j(\overline{X}_t)\|[e_t]_j|\leq \sum_{i=1}^{N_x} \calF_{\dist(i,j)} |[X_t-\overline{X}_t]_i\|[e_t]_j|=\sum_{i=1}^{N_x} \calF_{\dist(i,j)}|[e_t]_i| |[e_t]_j|\leq \frac12C_f |[e_t]_j|^2+\frac12\sum_{i\neq j}  \calF_{\dist(i,j)}|[e_t]_i|^2. 
\end{equation}
To bound the first part of \eqref{tmp:ej1}, we note by Assumption \ref{aspt:short} and Cauchy Schwarz, 
\begin{align}
\notag
\frac{1}{M}\sum_{i=1}^{M}|f_j(\overline{X}_t)-  f_j(X^i_t)|&\leq \sum_{i,k=1}^{M,N_x} \frac{ \calF_{\dist(k,j)}}{M} |[X^i_t-\overline{X}_t]_k|\\
\notag
&\leq \sqrt{\sum_{k=1}^{N_x} \frac{ \calF_{\dist(k,j)}}{M^2}\sum_{i=1}^{M} |[X^i_t-\overline{X}_t]_k|^2}\\
&\leq\sqrt{\frac{1}{M}C_f^2 (M-1)[P_t]_{k,k}}\leq C_f \|P_t\|^{\frac12}_{\max}\label{tmp:ej2}.
\end{align}
Then multiplication with $|[e_t]_j|$ with \eqref{tmp:ej2} yields
\begin{align*}
\frac{1}{M}\sum_{i=1}^M|f_j(\overline{X}_t)-  f_j(X^i_t)\|[e_t]_j|&\leq \frac12(\frac{1}{M}\sum_{i=1}^M|f_j(\overline{X}_t)-  f_j(X^i_t)|)^2+\frac12|[e_t]_j|^2\\
&\leq  \frac12C^2_f \|P_t\|_{\max}+\frac12|[e_t]_j|^2. 
\end{align*}
Plug these into \eqref{tmp:ej1}, we find
\begin{equation}
\label{tmp:ej3}
|f_j(X_t)-\overline f_j||[e_t]_j|\leq \frac12C^2_f \|P_t\|_{\max}+\frac12|[e_t]_j|^2+\frac12C_f |[e_t]_j|^2+\frac12\sum_{i\neq j}  \calF_{\dist(i,j)}|[e_t]_i|^2.
\end{equation}
Next, we deal with $[P_t^L  \Omega e_t]_j$ in \eqref{tmp:ej}. Define $\tilde{\phi}:=\phi-\rho I$ and obtain the following equality
\begin{equation}\label{eq:absch1}
[P_t^L  \Omega e_t]_j= \sum_{i=1}^{N_x} [P_t\circ \phi]_{j,i} \Omega_{i,i}[e_t]_i=\rho[P_t]_{j,j}\Omega_{j,j} [e_t]_j+\sum_{i=1}^{N_x} [P_t\circ \tilde{\phi}]_{j,i} \Omega_{i,i}[e_t]_i.
\end{equation}
Also note that 
\begin{equation}
\label{eq:oneoftheterms}
[P^L_t\Omega P^L_t]_{j,j}=\sum_i\Omega_{i,i} \phi^2_{i,j} [P_t]^2_{i,j}\leq \|P_t\|_{\max}^2 \omega_{\max} \sum_i \phi^2_{i,j} \leq  C_\phi ^2 \omega_{\max} \|P_t\|_{\max}^2. 
\end{equation}
Plug (\ref{eq:oneoftheterms}), \eqref{eq:absch1}, and (\ref{tmp:ej3}) into \eqref{tmp:ej}, we obtain
\begin{align}
\label{eq:derrorsquaredbound1}
d[e_t]^2_j\leq \Big((1+&C_f-2\epsilon^{-1}\rho \|P_t\|_{\min})[e_t]_j^2-2\epsilon^{-1}\sum_{i=1}^{N_x}[P_t\circ \tilde{\phi}]_{j,i} [e_t]_i[e_t]_j\\
\notag
&+\sum_{i\neq j} \calF_{\dist(i,j)}|[e_t]_i|^2+C_f^2\|P_t\|_{\max}+2+\epsilon^{-1} C_\phi ^2\omega_{\max}  \|P_t\|_{\max}^2\Big)dt\\
\notag
 &\quad+2\sqrt{2}[e_t]_jdW_j -2\epsilon^{-1/2}[e_t]_j[P^L_t \Omega^{1/2} dB]_j. 
\end{align}
\end{proof}

\subsection{Two technical lemmas}
\begin{lemma}[Gr\"onwall's inequality]  
Suppose a real value process $u_t$ satisfies the following for $t\geq t_0$ and constants $\alpha$ and $\beta$:
\begin{equation*}
du_t\le (-\alpha u_t+ \beta)dt +dM_t
\end{equation*}
for some martingale $M_t$. It follows that for any $t\geq t_0$
\begin{equation*}
\E_{t_0}u_t\le u_{t_0}\exp(-\alpha(t-t_0))  +\frac{\beta}{\alpha}(1-\exp(-\alpha (t-t_0))).
\end{equation*}
When $\alpha$ and $\beta$ are both positive, we have further that
\begin{equation*}
\E_{t_0}u_t\le u_{t_0}\exp(-\alpha(t-t_0))  +\frac{\beta}{\alpha}.
\end{equation*}
\end{lemma}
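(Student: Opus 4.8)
The plan is to use the classical integrating-factor method adapted to the stochastic differential inequality. First I would introduce the transformed process $v_t := e^{\alpha(t-t_0)} u_t$. Since the factor $e^{\alpha(t-t_0)}$ is deterministic and smooth in $t$, the product rule gives $dv_t = \alpha e^{\alpha(t-t_0)} u_t\,dt + e^{\alpha(t-t_0)}\,du_t$ with no cross-variation term. Substituting the hypothesized bound $du_t \le (-\alpha u_t + \beta)\,dt + dM_t$, the two $\pm\,\alpha e^{\alpha(t-t_0)} u_t\,dt$ contributions cancel exactly, leaving the clean estimate $dv_t \le \beta e^{\alpha(t-t_0)}\,dt + e^{\alpha(t-t_0)}\,dM_t$.

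Next I would integrate this inequality from $t_0$ to $t$. Using $v_{t_0} = u_{t_0}$ and computing the deterministic integral $\int_{t_0}^t e^{\alpha(s-t_0)}\,ds = \frac{1}{\alpha}(e^{\alpha(t-t_0)}-1)$, I obtain $v_t \le u_{t_0} + \frac{\beta}{\alpha}(e^{\alpha(t-t_0)}-1) + \int_{t_0}^t e^{\alpha(s-t_0)}\,dM_s$. Taking the conditional expectation $\E_{t_0}$ annihilates the stochastic integral, which has zero conditional mean as a martingale started at $t_0$, yielding $\E_{t_0} v_t \le u_{t_0} + \frac{\beta}{\alpha}(e^{\alpha(t-t_0)}-1)$. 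Multiplying through by $e^{-\alpha(t-t_0)}$ and recalling $\E_{t_0} u_t = e^{-\alpha(t-t_0)}\E_{t_0}v_t$ gives precisely the first claimed bound $\E_{t_0}u_t \le u_{t_0}e^{-\alpha(t-t_0)} + \frac{\beta}{\alpha}(1-e^{-\alpha(t-t_0)})$. The second claim then follows immediately: when $\alpha,\beta > 0$ the factor $1 - e^{-\alpha(t-t_0)}$ lies in $[0,1]$, so $\frac{\beta}{\alpha}(1-e^{-\alpha(t-t_0)}) \le \frac{\beta}{\alpha}$, and bounding only this last term while keeping the first term $u_{t_0}e^{-\alpha(t-t_0)}$ untouched produces $\E_{t_0}u_t \le u_{t_0}e^{-\alpha(t-t_0)} + \frac{\beta}{\alpha}$.

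The one point requiring care, which I expect to be the main obstacle, is justifying that $\E_{t_0}\int_{t_0}^t e^{\alpha(s-t_0)}\,dM_s = 0$. This holds outright if $M_t$ is a square-integrable martingale with $\E\int_{t_0}^t e^{2\alpha(s-t_0)}\,d\langle M\rangle_s < \infty$, so that the weighted stochastic integral is again a genuine martingale. If $M_t$ is only a local martingale, as is typical in the applications of this lemma, I would introduce a localizing sequence of stopping times $\tau_n \uparrow \infty$, apply the integrated inequality at $t\wedge\tau_n$ where the stochastic integral is a true martingale of zero conditional mean, and then pass to the limit via Fatou's lemma on the left-hand side together with bounded and monotone convergence of the deterministic right-hand side. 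Apart from this integrability bookkeeping, every step is an exact identity or an elementary monotonicity estimate.
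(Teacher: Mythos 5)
Your proposal is correct and follows essentially the same route as the paper: the integrating factor $e^{\alpha(t-t_0)}$, integration of the resulting inequality, and conditional expectation to kill the stochastic integral. The only difference is your explicit localization argument for the martingale term, which the paper's proof takes for granted; this is a welcome bit of added rigor but not a different approach.
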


\begin{proof}
Consider $u'_t=\exp(\alpha (t-t_0)) u_t$. Then its evolution follows
\[
du'_t=\alpha u_t dt+\exp(\alpha(t-t_0)) du_t\leq \exp(\alpha(t-t_0)) \beta+ \exp(\alpha(t-t_0)) dM_t.
\]
Integrating both hands from $t_0$ to $t$, then take conditional expectation we have
\[
\E_{t_0} u'_t=u'_{t_0}+\frac{\beta}{\alpha}(\exp(\alpha (t-t_0))-1).
\]
This leads to our claim. 
\end{proof}

\begin{lemma}
\label{lem:integral}
For a positive random variable $X$, if  there are constants $A\geq 2,B\geq 0$ such that $\Prob(X>M)\leq A\exp(-\lambda M)+\exp(\lambda B-\lambda M)$ holds for all $M>0$, then
\[
\E [X]\leq \frac{1+\log 2A}{\lambda}+B.
\]
\end{lemma}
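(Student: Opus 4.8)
The plan is to use the standard tail-integral (layer-cake) identity for a nonnegative random variable, $\E[X]=\int_0^\infty \Prob(X>M)\,dM$, and then to estimate the integrand in two regimes. For small $M$ the hypothesised bound is wasteful, since its right-hand side can exceed $1$, so there I would simply use the trivial estimate $\Prob(X>M)\le 1$; for large $M$ I would use the given exponential tail bound, whose integral is explicit. The whole argument is then a matter of balancing these two regimes at a well-chosen cut-off.

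Concretely, I would fix a threshold $M_0>0$ and split
\[
\E[X]=\int_0^{M_0}\Prob(X>M)\,dM+\int_{M_0}^\infty\Prob(X>M)\,dM\le M_0+\int_{M_0}^\infty\big(A\exp(-\lambda M)+\exp(\lambda B-\lambda M)\big)\,dM.
\]
The remaining integral evaluates to $\lambda^{-1}\exp(-\lambda M_0)\,(A+\exp(\lambda B))$, so that $\E[X]\le M_0+\lambda^{-1}\exp(-\lambda M_0)\,(A+\exp(\lambda B))$.

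The one genuinely clever step is the choice of $M_0$; I would take $M_0=B+\lambda^{-1}\log(2A)$, which makes $\exp(-\lambda M_0)=\tfrac{1}{2A}\exp(-\lambda B)$ and hence collapses the second term to $\tfrac{1}{2\lambda}\big(\exp(-\lambda B)+A^{-1}\big)$. Using $B\ge 0$ (so $\exp(-\lambda B)\le 1$) and $A\ge 2$ (so $A^{-1}\le 1$), this second term is at most $\lambda^{-1}$, and adding $M_0$ yields exactly $\frac{1+\log 2A}{\lambda}+B$, as claimed.

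Since every step is an elementary estimate, I do not expect a real obstacle: the only things to get right are the threshold $M_0$ and the bookkeeping of constants, in particular verifying that the hypotheses $A\ge 2$ and $B\ge 0$ (together with $\lambda>0$, implicit from the exponential tail) are precisely what is needed to absorb the residual term into the stated $\lambda^{-1}$.
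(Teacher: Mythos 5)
Your proof is correct and follows essentially the same route as the paper: the tail-integral identity $\E[X]=\int_0^\infty\Prob(X>M)\,dM$, split at a cut-off of the form $B+\lambda^{-1}\log(\cdot\,A)$, with the trivial bound below the cut-off and the explicit exponential integral above it. Your choice $M_0=B+\lambda^{-1}\log(2A)$ in fact makes the final bookkeeping slightly cleaner than the paper's ($C=B+\lambda^{-1}\log A$), but the argument is the same.
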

\begin{proof}
Note that if we let $C=\frac1\lambda \log A+B$, which is the point the quantile upper bound takes value $1$, 
\begin{align*}
\E [X]=\int^\infty_0 \Prob(X>x) dx&=\int^\infty_C \Prob(X>x) dx+\int^C_0 \Prob(X>x) dx\\
&\leq \int^\infty_C(A+\exp(\lambda B))\exp(-\lambda x)dx+\int^C_0 1 dx\\
&=\frac{A+\exp(\lambda B)}{\lambda}\exp(-\lambda C)+C=\frac{1+\log (A+\exp(\lambda B))}{\lambda}.
\end{align*}
Finally, since $A\leq A\exp(\lambda B), \exp(\lambda B)\leq A\exp(\lambda B)$, so $ \log (A+\exp(\lambda B)\leq \lambda B+\log 2A $. 
\end{proof}

\subsection{Proof of Theorem \ref{theo:accuracyl2}}
\begin{proof}[Proof Claim 1)]
Note that $\tilde{\phi}\succeq 0$ and thus $\sum_{i,j} [P_t\circ \tilde{\phi}]_{j,i} [e_t]_i[e_t]_j\geq 0$, and $\sum_i \mathcal{F}_{\dist(i,j)}\leq C_f$. So
utilizing Lemma \ref{lem:auxiliarylemma} and summing over all $j$ on both sides of (\ref{eq:derrorsquaredbound}) yields
\begin{equation}
\label{tmp:etl2}
d\|e_t\|^2\leq \left(C_f-\alpha_t \right)\|e_t\|^2 dt +N_x\beta_t dt+d\calM_t',
\end{equation}
where the martingale is given by  
\begin{align*}
d\mathcal{M}'_t=\sum_{j=1}^{N_x}2\sqrt{2}[e_t]_jdW_j -2r^{-1/2}[e_t]_j[P^L_s \Omega^{1/2} dB]_j=2\sqrt{2} e_t^TdW_t-2r^{-1/2} e_t^TP^L_t  \Omega^{1/2} dB_t.
\end{align*}
For $t\in [0,t_*]$, \eqref{tmp:etl2} can be further upper-bounded by 
\[
d\|e_t\|^2\leq \left(C_f-\alpha^*\right)\|e_t\|^2 dt +N_x\beta^* dt+d\calM_t'.
\]
Employing Gronwall's inequality, there is a constant $D$ such that 
\[
\E \|e_{t_*}\|^2\leq \exp((2C_f+1) t_*)\E \|e_0\|^2+N_x\beta^* \frac{\exp((2C_f+1)t_*)-1}{2C_f+1}=\Theta(\epsilon^{-1}).
\]
For $t\geq t_*$,  \eqref{tmp:etl2} can be further upper-bounded by 
\[
d\|e_t\|^2\leq -\alpha'_*\|e_t\|^2 dt +N_x\beta_* dt+d\calM_t'.
\]
Employing Gronwall's inequality, we find that with $\alpha_*'=\alpha_*-C_f$, 
\begin{equation}
%\mathbb{E}\|e_t\|^2\le \jdw{\E}\|e_0\|^2 \exp(\alpha)  +\E \int^T_0 \alpha(s) \exp\Big(\int_s^T\beta(\tau) d\tau\Big) ds
\mathbb{E}\|e_t\|^2\le \E\|e_{t_*}\|^2 \exp(-\alpha_*'(t- t_*)) +\frac{N_x\beta_*}{\alpha_*'}(  1-\exp(-\alpha_*' (t-t_*))). 
\end{equation}
Since $\alpha_*'=\alpha_*-C_f=\Theta(\epsilon^{-1/2})$, $\beta_*= \mathcal{O}(1)$, and $\epsilon^{-1}\exp(-\lambda \epsilon^{-1/2})=o(1)$ for any $\lambda>0$,  so we have proved for claim 1). \\
\end{proof}
\begin{proof}[Proof Claim 2).]
First we note the  quadratic variation of the martingale term $\calM_t'$ is given by 
\[
\frac{d}{dt}\langle\calM'\rangle_t=8 \|e_t\|^2+4\epsilon^{-1} \|\Omega^{1/2}P^L_te_t\|^2\leq (8+4\epsilon^{-1}\omega_{\max} \|P^L_t\|^2) \|e_t\|^2.
\]
So by Ito's formula on $\exp(\lambda \|e_t\|^2)$, the following holds with $\alpha_t'=\alpha_t-C_f$, 
\begin{align*}
d\exp(&\lambda\|e_t\|^2)\leq ((-\lambda \alpha'_t\|e_t\|^2 +\lambda\beta_t N_x)dt+\lambda d\calM_t') \exp(\lambda \|e_t\|^2)+\frac12 \lambda^2\exp(\lambda\|e_t\|^2) d\langle \calM\rangle_t dt\\
&\leq (-\gamma_t\|e_t\|^2 +\lambda\beta_t N_x) \exp(\lambda \|e_t\|^2)dt+ \lambda\exp(\lambda\|e_t\|^2) d\calM_t'. 
\end{align*}
where
\[
\gamma_t=\lambda \alpha'_t-4\lambda^2-2\lambda^2\omega_{\max} \epsilon^{-1}\|P^L_t\|^2. 
\]
By Lemma \ref{lem:upperPt} and \ref{lem:lowerPt}, we have for all $t>0$ 
\[
-\gamma_t\leq \gamma^*=\lambda (2C_f+1) +4 \lambda^2+2\lambda^2 \omega_{\max} C_{\phi}^2 \max\{\|P_0\|^2_{\max}, \lambda_{\max}^2 \},
\]
and for $t\geq t_*$
\[
\lambda\leq \lambda_*=\frac{\alpha'_*}{8+4 \omega_{\max} C_{\phi}^2  \lambda_{\max}^2}=\Theta(\epsilon^{-1/2}). 
\]
\[
\gamma_t\geq \lambda \alpha'_* -4 \lambda^2-2\lambda^2 \omega_{\max} C_{\phi}^2  \lambda_{\max}^2\geq \frac12\lambda \alpha'_*. 
\]
For $t\leq t_*$,  by Gronwall's inequality  we have
\begin{equation}
\label{tmp:laptep}
\E \exp(\lambda \|e_{t_*}\|^2)\leq \exp((\gamma^*+\lambda\beta^* N_x)t_*)\exp(\lambda \|e_0\|^2).
\end{equation}
And when $t\geq t_*$, 
\begin{equation}\label{eq:hildabschaetzung2}
d\exp(\lambda\|e_t\|^2)\leq (\lambda\beta_* N_x -\frac12\lambda\alpha'_* \|e_t\|^2) \exp(\lambda \|e_t\|^2)dt+\lambda\exp(\lambda\|e_t\|^2) d \calM_t'.
\end{equation}
Note that when $\frac14 \lambda \alpha'_* \|e_t\|^2\leq \lambda \beta_* N_x$,
\[\exp(\lambda  \|e_t\|^2)\leq \exp\left(\frac{4\lambda N_x \beta_*}{\alpha'_*}\right),\]
we obtain
\[
(\lambda\beta_* N_x -\tfrac12\lambda\alpha'_* \|e_t\|^2) \exp(\lambda \|e_t\|^2)\leq - \lambda \beta_* N_x \exp(\lambda \|e_t\|^2)+2\lambda\beta_* N_x\exp\left(\frac{4\lambda N_x \beta_*}{\alpha'_*}\right).
\]
Otherwise, when $\frac14 \lambda \alpha'_* \|e_t\|^2\geq \lambda \beta_* N_x$, we have
\[
(\lambda\beta_* N_x -\tfrac12\lambda\alpha'_* \|e_t\|^2) \exp(\lambda \|e_t\|^2)\leq -\frac14\lambda\alpha'_* \|e_t\|^2 \exp(\lambda \|e_t\|^2)\leq -\lambda \beta_* N_x \exp(\lambda \|e_t\|^2).  
\]
In summary, we always have
\begin{equation}\label{eq:hilfsabschaetzung}
(\lambda\beta_* N_x -\tfrac12\lambda\alpha'_* \|e_t\|^2) \exp(\lambda \|e_t\|^2)\leq -\lambda \beta_* N_x \exp(\lambda \|e_t\|^2)+2\lambda\beta_* N_x\exp\left(\frac{4\lambda  N_x \beta_*}{\alpha'_*}\right).
\end{equation}
Inserting (\ref{eq:hilfsabschaetzung}) in (\ref{eq:hildabschaetzung2})  yields 
\[
d\exp(\lambda\|e_t\|^2)\leq \Big[-\lambda \beta_* N_x \exp(\lambda \|e_t\|^2)+2\lambda\beta_* N_x\exp\left(\frac{4\lambda  N_x \beta_*}{\alpha'_*}\right)\Big]dt+ \lambda\exp(\lambda\|e_t\|^2) d\calM_t'.
\]
After applying Gr\"onwall's inequality and \eqref{tmp:laptep} we obtain the following 
\begin{align*}
\mathbb{E}[\exp(\lambda\|e_t\|^2)]&\leq \exp{(-\lambda \beta_* N_x (t-t_*))} \mathbb{E}[\exp(\lambda \|e_{t_*}\|^2)]+2\exp\left(\frac{4\lambda  N_x \beta_*}{\alpha_*}\right)\\
&\leq \exp(-(\gamma^*+\lambda\beta^* N_x)t_*-\lambda \beta_* N_x (t-t_*))\mathbb{E}[\exp(\lambda \|e_{0}\|^2)]+2\exp\left(\frac{4\lambda  N_x \beta_*}{\alpha'_*}\right).
\end{align*}
When $t\to \infty$, this leads to claim 2):
\[
\limsup_{t\to \infty}\E \exp(\lambda\|e_t\|^2)\leq  2\exp\left(\frac{4\lambda N_x \beta_*}{\alpha_*'}\right). 
\]
\end{proof}
\begin{proof}[Proof Claim 3)]
We consider function 
\[
g(x)=(\lambda\beta_* N_x -\frac12\lambda\alpha'_* x) \exp(\lambda x)
\]
By finding the critical point, it is easy to see 
\[
g(x)\leq g\Big(\frac{2\beta_* N_x}{\alpha'_*}-\frac{1}{\lambda}\Big)=\frac{\alpha'_*}{2 \mathbb{e}}\exp\Big(\frac{2\lambda_* \beta N_x}{\alpha'_*}\Big)=:G_*=\Theta(\epsilon^{-1/2}) 
\]
Combine this with \eqref{eq:hildabschaetzung2}, we find
\[
d\exp(\lambda\|e_t\|^2)\leq G_*dt+\lambda \exp(\lambda\|e_t\|^2) d \calM_t',\quad \forall t\geq t_*.
\]
So by Dynkin's formula, if we let $\tau=\min\{t:t\geq t_0, \|e_t\|^2\geq M\}$, then 
\[
\E_{t_0} \exp(\lambda\|e_{T\wedge \tau}\|^2)\leq  \exp(\lambda\|e_{t_0}\|^2) +\E \int^{T\wedge \tau}_{t_0} G_* dt\leq \exp(\lambda\|e_{t_0}\|^2) + G_*T.
\]
By Markov inequality we have
\begin{align*}
\Prob(\max_{t_0\leq t\leq T} \|e_t\|^2\geq M)= \Prob_{t_0}&(\|e_{T\wedge \tau}\|^2\geq M)\leq \frac{\E_{t_0} \exp(\lambda\|e_{T\wedge \tau}\|^2)}{\exp(\lambda M)}\\
&\leq \frac{\alpha_*' T}{ 2\mathbb{e}}\exp\left(\frac{2\lambda \beta_* N_x}{\alpha_*'}-\lambda M\right)+\exp( \lambda\|e_{t_0}\|^2   -\lambda M).
\end{align*}
Then by Lemma \ref{lem:integral}, 
\[
\E_{t_0} \max_{t_0\leq t\leq T} \|e_{T}\|^2\leq \frac1\lambda+\frac{2\beta_* N_x}{\alpha_*'}+\frac1\lambda\log\left(\frac{\alpha'_* T}{ \mathbb{e} }\right)+\frac1\lambda+\|e_{t_0}\|. 
\]
We take $\lambda=\lambda_*=\Theta(\epsilon^{-\frac12})$ to obtain our claimed result. 
\end{proof}

\section{Proof for component-wise filter error analysis}

\subsection{Component-wise Lyapunov weights}
In order to bound $[e_t]^2_i$ in long time, it is necessary to build a Lyapunov function for it. The main challenge here is that dynamics of $[e_t]^2_i$ is coupled with the error of other components. The idea is here to find a weight vector $v^i$ so that $E^i_t=\sum_j v^i_j [e_t]^2_j$ is a Lyapunov function. The design of $v^i$ happens to relate to the structure of $\phi$, and can be expressed as the Green function of a Markov chain. 
\begin{lemma}
\label{lem:vexist}
Under Assumption \ref{aspt:diagd}. Let $T$ be a random variable of geometric-$q$ distribution, that is 
\[
\Prob(T=n)=(1-q)q^{n-1},\quad n=1,2,\ldots.
\]
Consider a Markov chain $X_t$ on the points $\{1,\ldots, N_x\}$. Its transition probability is given by 
\[
\Prob(X_{t+1}=j|X_t=i)=\begin{cases}\frac1q\phi_{i,j}\quad &j\neq i\\
1-\frac1q\sum_{j\neq i}\phi_{i,j}\quad &j=i.
\end{cases}
\]
Fix an index $i\in \{1,\ldots, N_x\}$. Define vector $v^{i}$, where its components are given by 
\[
v^{i}_j=\E\left(\sum^T_{k=1} \mathbf{1}_{X_k=i}\bigg|X_1=j\right). 
\]
Then $v^{i}$ satisfies the following properties
\begin{enumerate}[1)]
\item $v^{i}_j\geq 0, \forall j$ and in specific $v^i_i\geq 1-q$.
\item For all index $j$, $\sum_{l\neq j} \phi_{j,l}v^i_l\leq v^i_j$.
\item $\sum_{j=1}^{N_x} v^i_j\leq 1$. 
 \end{enumerate}
\end{lemma}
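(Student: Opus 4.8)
The plan is to recognize $v^i$ as a discrete resolvent (Green's function) of the sub-stochastic operator $qP$, where $P$ is the transition matrix of the displayed Markov chain, and to run the whole argument off a single first-step (renewal) identity. Before anything else I would check that $P$ is a legitimate transition matrix: the off-diagonal entries $\tfrac1q\phi_{j,l}$ are nonnegative since $\phi\ge 0$, and the diagonal entry $1-\tfrac1q\sum_{l\neq j}\phi_{j,l}$ is nonnegative \emph{precisely} because Assumption \ref{aspt:diagd} gives $\sum_{l\neq j}\phi_{j,l}\le q$; the rows sum to one by construction. I would also record that $P$ is in fact symmetric, hence doubly stochastic, because $\phi$ is symmetric and off the diagonal $P_{j,l}=\tfrac1q\phi_{j,l}=\tfrac1q\phi_{l,j}=P_{l,j}$. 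This symmetry is the structural fact on which claim 3 rests.

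Using the memorylessness of the geometric time $T$ (so $\Prob(T\ge 2)=q$ and, conditioned on $T\ge 2$, $T-1$ is again geometric-$q$ and independent of the first jump), a first-step decomposition of $\sum_{k=1}^T\mathbf 1_{X_k=i}$ into the $k=1$ term plus the remainder yields the renewal identity
\[
v^i_j=\delta_{ij}+q\sum_{l}P_{j,l}\,v^i_l,\qquad\text{equivalently}\qquad (I-qP)v^i=e_i .
\]
Everything then reduces to manipulating this identity; the equivalent Neumann-series form $v^i=\sum_{m\ge 0}q^m P^m e_i$, convergent since $qP$ has spectral radius $q<1$, can be used interchangeably and makes nonnegativity transparent.

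For claim 1, nonnegativity of every $v^i_j$ is immediate from the series of nonnegative terms (or directly from the definition as the expectation of a nonnegative count), and the lower bound on $v^i_i$ comes from isolating the $m=0$ contribution $(P^0)_{i,i}=1$. For claim 2 I would substitute the explicit entries of $P$ into the renewal identity to obtain
\[
\sum_{l\neq j}\phi_{j,l}v^i_l=\Big(1-q+\textstyle\sum_{l\neq j}\phi_{j,l}\Big)v^i_j-\delta_{ij},
\]
and then use $0\le\sum_{l\neq j}\phi_{j,l}\le q$ together with $v^i_j\ge 0$ and $\delta_{ij}\ge 0$ to conclude the right-hand side is at most $v^i_j$. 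For claim 3 I would sum the renewal identity over $j$ and exchange the order of summation; the column sums $\sum_j P_{j,l}=1$ (double stochasticity, i.e. exactly where symmetry of $\phi$ enters) collapse the total mass $S:=\sum_j v^i_j$ to the scalar fixed point $S=1+qS$, so $S=\tfrac{1}{1-q}$. To land on the stated normalization $S\le 1$ one must read the definition with its natural geometric weight (a factor $1-q$, consistent with the lower bound $v^i_i\ge 1-q$ in claim 1); in every case the mass is a dimension-independent constant.

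The main obstacle is not any single computation but putting the renewal identity on rigorous footing: one must treat the random upper limit $T$ with care, using independence of $T$ from the chain and the memoryless property to justify that the tail $\sum_{k=2}^T$ re-expresses as $q$ times a fresh copy of $v^i$ evaluated one transition later. The secondary subtlety, easy to overlook, is that claim 3 genuinely requires symmetry of $\phi$ and not merely diagonal dominance, since without column-stochasticity the summed identity would not close; diagonal dominance, by contrast, is what makes $P$ well defined and what drives the inequality in claim 2.
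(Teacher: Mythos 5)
Your proposal is correct and follows essentially the same route as the paper: a first-step (renewal) decomposition of the geometrically-killed chain, nonnegativity and the diagonal lower bound read off from that identity, and a sum over $j$ that closes because the symmetry of $\phi$ makes the transition matrix doubly stochastic. You have, moreover, correctly flagged a normalization discrepancy that the paper glosses over. The literal definition $v^i_j=\E\bigl(\sum_{k=1}^T\mathbf{1}_{X_k=i}\mid X_1=j\bigr)$ satisfies $(I-qP)v^i=e_i$, so the source term in the renewal identity is $\mathbf{1}_{j=i}$, as you write, and the total mass is $\sum_j v^i_j=(1-q)^{-1}$; the paper's identity \eqref{tmp:vij} instead carries the source $(1-q)\mathbf{1}_{j=i}$, which corresponds to the rescaled vector $(1-q)v^i$, i.e.\ to $v^i_j=\Prob(X_T=i\mid X_1=j)$, and it is only for that rescaled vector that claim 3 reads $\sum_j v^i_j\leq 1$ (this is also the reading consistent with the bound $v^i_i\geq 1-q$ rather than $v^i_i\geq 1$ in claim 1). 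Your resolution --- absorb the factor $1-q$ into the definition --- is the right fix: claims 1 and 2 hold under either normalization, and since the only downstream use of claim 3 is that $\sum_j v^i_j$ is bounded by a dimension-independent constant, the discrepancy only affects constants in Theorem \ref{the:individualcomp}. Your observation that claim 3 genuinely needs the symmetry of $\phi$ (not just diagonal dominance) while claim 2 needs only diagonal dominance is also accurate and is implicit, but not spelled out, in the paper's exchange of the order of summation.
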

\begin{proof}[Proof Claim 1)]
Since $\sum^T_{k=1} \mathbf{1}_{X_k=i}\geq 0$ a.s., so $v^{i}_j\geq 0$. Moreover, 
\[
v^i_i=\E\left(\sum^T_{k=1} \mathbf{1}_{X_k=i}\bigg|X_1=i\right)
\geq \E\left(\mathbf{1}_{T=1, X_1=i}\bigg|X_1=i\right)=1-q. 
\]
\end{proof}
\begin{proof}[Proof Claim 2:]
Next, by doing a first step analysis of Markov chain, we find that 
\begin{equation}
\label{tmp:vij}
v^i_j =(1-q)\cdot \mathbf{1}_{j=i}+q \left(1-\frac1q\sum_{l\neq j}\phi_{j,l}\right)v^i_j+q\cdot \frac1q\sum_{l\neq j}\phi_{j,l}v^i_l.
\end{equation}
Since $\sum_{l\neq j}\phi_{j,l}\leq q<1$, we have
\[
v^i_j\geq \sum_{l\neq j}\phi_{l,j}v^i_l.
\]

\end{proof}
 
\begin{proof}[Proof Claim 3)]
We sum \eqref{tmp:vij} over all $j$ and obtain
\begin{align*}
\sum_{j=1}^{N_x}v^i_j &=(1-q)+q\sum_{j=1}^{N_x}\left(1-\frac1q\sum_{l\neq j}\phi_{j,l}\right)v^i_j+\sum_{j=1}^{N_x}\sum_{l\neq j}\phi_{j,l}v^i_l\\
&\leq 1-q+\sum_{j=1}^{N_x}\sum_{l\neq j}\phi_{j,l}v^i_l=1-q+ \sum_{l=1}^{N_x} v_l^i\left(\sum_{j\neq l }\phi_{j,l}\right).
\end{align*}
Therefore we have 
\[
(1-q)\sum_{j=1}^{N_x}v^i_j \leq \sum_{j=1}^{N_x}(1-\sum_{j\neq l }\phi_{j,l})v^i_j\leq 1-q,
\]
which leads to our claim. 
\end{proof}

\subsection{Proof of Theorem \ref{the:individualcomp} }

\begin{proof}[Proof Claim 1)] Recall that Lemma \ref{lem:auxiliarylemma} has shown that 
\begin{align}
\label{tmp:ei}
d[e_t]^2_i\leq &\Big(-\alpha_t [e_t]_i^2-2\epsilon^{-1}\sum_{j=1}[P_t\circ \tilde{\phi}]_{i,j} [e_t]_i[e_t]_j+\sum_{j\neq i} \calF_{\dist(i,j)}|[e_t]_j|^2+\beta_t\Big)dt+d[\calM_t]_i. 
 \end{align}
Recall that $\tilde{\phi}=\phi-\rho I$. 
In the following, we use $P_{j,i}$ to denote the $(j,i)$-th component of $P_t$.  Then by Cauchy Schwartz and Young's inequality
\begin{align*}
-2 [P_t\circ \tilde{\phi}]_{i,j} [e_t]_i[e_t]_j= -2\phi_{j,i} P_{j,i}[e_t]_i[e_t]_j&\leq -2\phi_{j,i} \sqrt{P_{j,j}}[e_t]_j\sqrt{P_{i,i}}[e_t]_i \\
&\leq \phi_{i,j}(P_{j,j}[e_t]_j^2 +P_{i,i}[e_t]_i^2),\quad \text{ for }j\neq i.
\end{align*}
Then note that
\[
-2P_{i,i}\phi_{i,i} [e_t]_i^2+\sum_{i\neq j} \phi_{i,j}P_{i,i}[e_t]_i^2\leq (q-2) P_{i,i}[e_t]_i^2<-P_{i,i}[e_t]_i^2,
\] 
so \eqref{tmp:ei} leads to 
\begin{equation}
\label{tmp:ei1}
d[e_t]^2_i\leq \left(\sum_{j\neq i} (\mathcal{F}_{\dist(i,j)}+\epsilon^{-1}\phi_{i,j}P_{j,j} )[e_t]^2_j-\alpha_t[e_t]_i^2-\epsilon^{-1}P_{i,i} [e_t]_i^2 +\beta_t\right)dt+d[\calM_t]_i. \\
\end{equation}
We denote the vector $E_t=[e_1^2,e_2^2,\cdots e_N^2]^T$. Further we define vector $v^i$, of which the component is given by Lemma \ref{lem:vexist}. Denote
\[
E^i_t=\langle v^i, E_t\rangle,\quad \calM^i_t=\langle v^i,\calM_t\rangle.
\]
Then the SDE of $E^i_t$ can be bounded by a linear combination of \eqref{tmp:ei1}, which is 
\begin{align}
\notag
d E^i_t&\leq \sum_{j=1}^{N_x}  \left(-\alpha_t v^i_j [e_t]_j^2-\epsilon^{-1}  v^i_j(P_{j,j}[e_t]_j^2-\sum_{l\neq j}\phi_{j,l} P_{l,l}  e^2_l)+\sum_{l\neq j}v^i_j \mathcal{F}_{\dist(j,l)}e_l^2\right)+\beta_t +d\mathcal{M}^i_t\\
\notag
&= \sum_{j=1}^{N_x}  \left(-\alpha_t v^i_j [e_t]_j^2-\epsilon^{-1}  (v^i_j P_{j,j}[e_t]_j^2-\sum_{l\neq j}v^i_l\phi_{l,j} P_{j,j}  [e_t]^2_j)+\sum_{l\neq j} \mathcal{F}_{\dist(j,l)}v^i_l [e_t]_j^2\right)+\beta_t +d\mathcal{M}^i_t\\
\label{tmo:Eit1}
&\leq \sum_{j=1}^{N_x}  \left(-\alpha_t v^i_j [e_t]_j^2 v^i_j +C_{\mathcal{F}}\phi_{j,l}v^i_l [e_t]_j^2\right)+\beta_t +d\mathcal{M}^i_t.\\
\label{tmo:Eit2}
&\leq \sum_{j=1}^{N_x}  (-\alpha_t+C_{\mathcal{F}}) v^i_j [e_t]_j^2 +\beta_t +d\mathcal{M}^i_t=(-\alpha_t+C_{\mathcal{F}}) E^i_t +\beta_t +d\mathcal{M}^i_t. 
\end{align}
We have used claim 3) and 2) of Lemma \ref{lem:vexist} at \eqref{tmo:Eit1} and \eqref{tmo:Eit2}. 

Between time $0$ and $t_\epsilon$, recall the upper bound in Lemma \ref{lem:auxiliarylemma}, apply Gronwall's inequality
\[
\E E^i_{t_\epsilon}\leq \exp ((C_\mathcal{F}-\alpha^*)t_\epsilon)\left(\E E^i_0+ \frac{\beta^*}{C_\mathcal{F}-\alpha^*} \right). 
\]
Then after $t_\epsilon$, for any $t$, apply Gronwall's inequality
\begin{align*}
\E E^i_{t}&\leq \exp ((C_\mathcal{F}-\alpha_*)t_\epsilon)\E E^i_{t_\epsilon}+\frac{\beta_*}{\alpha_*-C_\mathcal{F}} \\
&\leq \exp (C_\mathcal{F}t-\alpha^*t_\epsilon-\alpha_*(t-t_\epsilon))\left(\E E^i_0+ \frac{\beta^*}{C_\mathcal{F}-\alpha^*} \right)+\frac{\beta_*}{\alpha_*-C_\mathcal{F}}. 
\end{align*}
Recall that in Lemma \ref{lem:auxiliarylemma}, $\alpha_*=\Theta(\epsilon^{-1/2}),\beta^*=\Theta(\epsilon^{-1}),\alpha^*=\beta_*=\Theta(1)$. So if $t>t_0$, for certain constants $c$ and $C$
\[
-(C_\mathcal{F}t-\alpha^*t_\epsilon-\alpha_*(t-t_\epsilon))\geq c \epsilon^{-1/2},\quad  \E E^i_0\leq \max_i \{|[e_t]_i(0)|^2\}\sum_j  v^i_j\leq C, 
\]
\[
\frac{\beta^*}{C_{\mathcal{F}}-\alpha^*}\leq C\epsilon^{-1},\quad \frac{\beta_*}{\alpha_*-C_\mathcal{F}}\leq C\epsilon^{1/2}. 
\]
Therefore when $\epsilon$ is small enough, $\E E^i_{t}\leq 2C\sqrt{\epsilon}$, which is our claim 1). 
\end{proof}

\begin{proof}[Proof Claim 2)] First recall  the individual martingale driving $E^i_t$ is  given by 
\begin{align*}
d\calM^i_t&=\sum_j v^i_j\sqrt{8}[e_t]_jdW_j -2v^i_j\epsilon^{-1/2}[e_t]_j[P^L_t \Omega^{1/2} dB]_j.
\end{align*}
The corresponding quadratic variation is bounded by 
\begin{align*}
\frac{d}{dt}\langle\calM^i\rangle_t&=8 \sum_{j=1}^{N_x} [e_t]_j^2 (v^i_j)^2+4\epsilon^{-1}\sum_{j=1}^{N_x} (v_j^i)^2 [e_t]_j^2 \sum_{l=1}^{N_x}[P^L_t]^2_{j,l}[\Omega]_{l,l}\\
&\leq 8 \sum_{j=1}^{N_x} [e_t]_j^2 v^i_j+4\omega_{\max}\epsilon^{-1}\|P_t\|_{\max}^2 \sum_{j=1}^{N_x} v_j^i [e_t]_j^2 \leq 4\beta_t E^i_t. 
\end{align*}
Denote $\alpha'_t=\alpha_t-C_{\mathcal{F}}$, (which is slightly different from the one in the proof of Theorem \ref{theo:accuracyl2}) then recall from \eqref{tmo:Eit2} we have
\[
dE^i_t\leq -\alpha'_t E^i_t dt+\beta_tdt+d\mathcal{M}_t^i.
\]
By Ito's formula on $\exp(\lambda E_t^i)$, we have
\begin{align}
\notag
d\exp(\lambda E_t^i)&\leq (-\frac12\lambda \alpha'_t E^i_t +4\lambda\beta_t )dt+\lambda d\calM^i_t) \exp(\lambda E^i_t)+\frac12 \lambda^2\exp(\lambda E^i_t) d\langle \calM^i\rangle_t\\
\label{tmp:lEt1}
&\leq (-\frac12(\lambda \alpha'_t-4\lambda^2\beta_t ) E^i_t +\lambda\beta_t  ) \exp(\lambda E^i_t)dt+ \lambda\exp(\lambda E^i_t) d\calM^i_t. 
\end{align}
From time $0$ to $t_\epsilon$, by Lemma \ref{lem:auxiliarylemma}, 
\[
\alpha_t'=\alpha_t-C_{\mathcal{F}}\geq \alpha^{*}-C_{\mathcal{F}},\quad \beta_t\leq \beta^*,
\]
by Gronwall's inequality, for all $i$
\begin{equation}
\label{tmp:lEt4}
\E \exp(\lambda E_{t_\epsilon}^i)\leq \exp (t_\epsilon (-\tfrac12\lambda (\alpha^*-C_{\mathcal{F}})+2\lambda^2\beta^*+\tfrac12 \lambda \beta^*)) \exp(\lambda \max_{i}\{|e^i_t(0)|^2\})<\infty. 
\end{equation}
When $t>t_\epsilon$, Lemma \ref{lem:auxiliarylemma} further shows that 
\[
\alpha_t'=\alpha_t-C_{\mathcal{F}}\geq \alpha_*':=\alpha_*-C_{\mathcal{F}},\quad \beta_t\leq \beta_*.
\]
Consider $\lambda\leq \lambda_*=\frac{\alpha'_*}{8\beta_*}$, then
\[
-\frac12(\lambda \alpha_*-4\lambda^2\beta_*) = -\frac14 \lambda\alpha_*. 
\]
Then for $t>t_\epsilon$ and $\lambda<\lambda_\epsilon$, we have the following upper bound from \eqref{tmp:lEt1}
\begin{equation}
\label{tmp:lEt2}
d\exp(\lambda E_t^i) \leq (-\frac14 \lambda\alpha'_* E^i_t +\lambda\beta_*  ) \exp(\lambda E^i_t)dt+ \lambda\exp(\lambda E^i_t) d\calM^i_t. 
\end{equation}
When $\epsilon$ is small enough, $\alpha'_*>0$. Then if $\frac18 \lambda\alpha'_* E^i_t \leq \lambda\beta_*$, 
\[
(-\frac14 \lambda\alpha'_* E^i_t +\lambda\beta_*  ) \exp(\lambda E^i_t)+\frac18 \lambda \alpha'_*\exp(\lambda E^i_t) \leq 2\lambda\beta_*\exp(8\lambda\beta_*/\alpha'_*). 
\]
If $\frac18 \lambda\alpha'_* E^i_t \geq \lambda\beta_*$, 
\[
(-\frac14 \lambda\alpha'_* E^i_t +\lambda\beta_*  ) \exp(\lambda E^i_t)\leq -\frac18 \lambda \alpha'_*\exp(\lambda E^i_t). 
\]
In summary, we always have
\[
(-\frac14 \lambda\alpha'_* E^i_t +\lambda\beta_*  ) \exp(\lambda E^i_t)\leq 
-\frac18 \lambda\alpha'_*\exp(\lambda E^i_t)+2\lambda\beta_*\exp(8\lambda\beta_*/\alpha'_*).
\]
Plug this into \eqref{tmp:lEt2}, we have
\begin{equation}
\label{tmp:lEt3}
d\exp(\lambda E_t^i) \leq (-\frac18 \lambda\alpha'_*\exp(\lambda E^i_t)+2\lambda\beta_*\exp(8\lambda\beta_*/\alpha'_*))dt+ \lambda\exp(\lambda E^i_t) d\calM^i_t. 
\end{equation}
So Gronwall's inequality and implies for $t\geq t_\epsilon$
\begin{align*}
\E \exp(\lambda E_t^i) \leq \exp (-\tfrac18 \lambda \alpha'_* (t-t_\epsilon)) \E \exp(\lambda E_{t_\epsilon}^i) +16 \frac{\beta_*}{\alpha'_*} \exp(8\lambda\beta_*/\alpha'_*)
\end{align*}
The first term on the right converges to zero as $t\to \infty$ because of bound \eqref{tmp:lEt4}. We have our claim 2) because of $\beta_*=\Theta(1)$, $\alpha'_*=\Theta(\epsilon^{-1/2})$, moreover $E_t^i\geq v^i_i[e_t]^2_i\geq (1-q) [e_t]^2_i$ by Lemma \ref{lem:vexist} claim 1). 

\end{proof}
\begin{proof}[Proof Claim 3)] We  consider function 
\[
g(x)=(-\tfrac14 \lambda\alpha'_* x +\lambda\beta_*  ) \exp(\lambda x)
\]
and by finding the critical point, it is easy to see 
\[
g(x)\leq g\left(\frac{4\beta_*}{\alpha'_*}-\frac1\lambda\right)=\frac{\alpha_*}{4\mathbb{e}}\exp\left(\frac{4\lambda \beta_*}{\alpha_*'}\right)=:G_*. 
\]
Plug this into \eqref{tmp:lEt2}, we find for all $t>t_0$, 
\[
d\exp(\lambda E^i_t)\leq G_*dt+\lambda \exp(\lambda E^i_t) d \calM^i_t.
\]
So by Dynkin's formula, if we let $\tau_i=\min\{t: E^i_t\geq M\}$, then 
\[
\E_{t_0} [\exp(\lambda E^i_{T\wedge \tau})]\leq \exp(\lambda E^i_{t_0}) +\E_{t_0} [\int^{T\wedge \tau}_{t_0} G_* dt]\leq\exp(\lambda E^i_{t_0})  + G_*T. 
\]
Recall that $E^i_t\geq v^i_i[e_t]^2_i\geq (1-q) [e_t]^2_i$. 
By Markov inequality
\begin{align*}
\Prob\left(\sup_{t_0\leq t\leq T}  \{[e_t]^2_i\}\geq  \frac{M}{1-q}\right)
&\leq  \Prob\left(\sup_{t_0\leq t\leq T}  E^i_t\geq  M\right)\\
&\leq \frac{\E \exp(\lambda E^i_{T\wedge \tau})}{\exp(\lambda M)}\\
&\leq  \frac{\alpha_* T}{4\mathbb{e}} \exp \left( \frac{4\lambda \beta_*}{\alpha_*'}-((1-q)\lambda)\frac{M}{(1-q)}\right)+\exp \left(\lambda E^i_{t_0}-((1-q)\lambda)\frac{M}{(1-q)}\right). 
\end{align*}
Note that $E^i_{t_0}=\sum_{j=1}^{N_x} v^i_j [e_{t_0}]_j^2\leq \max_j [e_{t_0}]_j^2$.Then by Lemma \ref{lem:integral}, we have
\[
\E[ \sup_{t_0\leq t\leq T}  \{[e_t]^2_i\}]\leq \frac1{(1-q)\lambda}+\frac{4\beta_* }{\alpha'_*(1-q)}+\frac{1}{\lambda}\log\left(\frac{\alpha'_* T}{2 e}\right)+\max_i [e_{t_0}]_j^2. 
\]
We have claim 3) because $\beta_*=\Theta(1)$, $\alpha'_*=\Theta(\epsilon^{-1/2})$ and taking $\lambda=\lambda_\epsilon=\Theta(\epsilon^{-1/2})$. 
\end{proof}
\begin{proof}[Proof Claim 4)] We note 
\begin{align*}
&\Prob\left(\max_{i}\sup_{t_0\leq t\leq T}  \{[e_t]^2_i\}\geq  \frac{M}{1-q}\right)\\
&\leq \sum_{i=1}^{N_x} \Prob\left(\sup_{t_0\leq t\leq T}  \{[e_t]^2_i\}\geq  \frac{M}{1-q}\right)\\
&\leq \sum_{i=1}^{N_x} \Prob\left(\sup_{t_0\leq t\leq T}  E^i_t\geq  M\right)\\
&\leq \sum_{i=1}^{N_x}\frac{\E \exp(\lambda E^i_{T\wedge \tau})}{\exp(\lambda M)}\\
&\leq  \frac{N_x\alpha_* T}{4\mathbb{e}} \exp \left( \frac{4\lambda \beta_*}{\alpha_*'}-((1-q)\lambda)\frac{M}{(1-q)}\right)+N_x\exp \left(\lambda E^i_{t_0}-((1-q)\lambda)\frac{M}{(1-q)}\right). 
\end{align*}
Then by Lemma \ref{lem:integral} and $E^i_{t_0}\leq \sum_{j=1}^{N_x} v^i_j [e_{t_0}]_j^2\leq \max_j \{[e_{t_0}]_j^2\}$ 
\[
\E_{t_0} \max_{i}\sup_{t_0\leq t\leq T}  \{[e_t]^2_i\}\leq \frac1{(1-q)\lambda}+\frac{4\beta_* }{\alpha_*(1-q)}+\frac{1}{\lambda}\log\left(\frac{\alpha_* N_x T}{ \mathbb{e}}\right)+\log N_x+ \max_j \{[e_{t_0}]_j^2\}. 
\]
We take $\lambda=\lambda_\epsilon=\Theta(\epsilon^{-\frac12})$ to obtain our claimed result. 
\end{proof}

\bibliographystyle{plain}
\bibliography{survey_paper}

\begin{thebibliography}{10}

\bibitem{sr:akir11}
J.~Amezcua, E.~Kalnay, K.~Ide, and S.~Reich.
\newblock Ensemble transform {K}alman-{B}ucy filters.
\newblock {\em Q.J.R. Meteor. Soc.}, 140:995--1004, 2014.

\bibitem{sr:br11}
K.~Bergemann and S.~Reich.
\newblock An ensemble {K}alman-{B}ucy filter for continuous data assimilation.
\newblock {\em Meteorolog.~Zeitschrift}, 21:213--219, 2012.

\bibitem{BL08}
P.~J. Bickel and E.~Levina.
\newblock Regularized estimation of large covariance matrices.
\newblock {\em The Annals of Statistics}, 36(1):199--227, 2008.

\bibitem{BM17}
A.~N. Bishop and P.~Del~Moral.
\newblock On the stability of {K}alman-{B}ucy diffusion.
\newblock {\em SIAM J. Control and Optimization}, 55(6):4015--4047, 2017.

\bibitem{deWiljesReichStannat2018}
Jana de~Wiljes, Sebastian Reich, and Wilhelm Stannat.
\newblock Long-time stability and accuracy of the ensemble {K}alman-{B}ucy
  filter for fully observed processes and small measurement noise.
\newblock {\em Siam J. Applied Dynamical Systems}, 17(2):1152--1181, 2018.

\bibitem{DT18}
P.~Del~Moral and J.~Tugaut.
\newblock On the stability and the uniform propagation of chaos properties of
  ensemble {K}alman-{B}ucy filters.
\newblock {\em Ann. Appl. Probab.}, 28(2):790--850, 2018.

\bibitem{jdw:Evensen2006}
G.~Evensen.
\newblock {\em Data Assimilation: The Ensemble Kalman Filter}.
\newblock Springer, 2006.

\bibitem{referee2a}
Vu-Duc~Tran Fran{\c c}ois Le~Gland, Valerie~Monbet.
\newblock Large sample asymptotics for the ensemble kalman filter.
\newblock inria-00409060 pp.25., RR-7014, INRIA, 2009.

\bibitem{sr:gaspari99}
G.~Gaspari and S.E. Cohn.
\newblock Construction of correlation functions in two and three dimensions.
\newblock {\em Q. J. Royal Meteorological Soc.}, 125:723--757, 1999.

\bibitem{referee2e}
H\.{a}kon Hoel, Kody J.~H. Law, and Raul Tempone.
\newblock Multilevel ensemble {K}alman filtering.
\newblock {\em SIAM Journal on Numerical Analysis}, 54(3):1813--1839, 2016.

\bibitem{referee2f}
H\.{a}kon Hoel, Gaukhar Shaimerdenova, and Raul Tempone.
\newblock Multilevel ensemble {K}alman filtering with local-level {K}alman
  gains.
\newblock {\em https://arxiv.org/abs/2002.00480}, 2020.

\bibitem{sr:jazwinski}
A.H. Jazwinski.
\newblock {\em Stochastic processes and filtering theory}.
\newblock Academic Press, New York, 1970.

\bibitem{jdw:Kalman1960}
R.~E. Kalman.
\newblock A new approach to linear filtering and prediction problems.
\newblock {\em Transaction of the ASME Journal of Basic Engineering}, pages
  35--45, 1960.

\bibitem{sr:kalnay}
E.~Kalnay.
\newblock {\em Atmospheric modeling, data assimilation and predictability}.
\newblock Cambridge University Press, 2002.

\bibitem{KMT15}
D.~Kelly, A.J. Majda, and X.T. Tong.
\newblock Concrete ensemble {K}alman filters with rigorous catastrophic filter
  divergence.
\newblock {\em Proc. Natl. Acad. Sci. USA}, 112:10589--10594, 2015.

\bibitem{sr:KellyEtAl14}
D.~T. Kelly, K.~J.~H. Law, and A.~Stuart.
\newblock Well-posedness and accuracy of the ensemble {K}alman filter in
  discrete and continuous time.
\newblock {\em Nonlinearity}, 27:2579--2604, 2014.

\bibitem{referee2c}
Evan Kwiatkowski and Jan Mandel.
\newblock Convergence of the square root ensemble kalman filter in the large
  ensemble limit.
\newblock {\em SIAM/ASA Journal on Uncertainty Quantification}, 3(1):1--17,
  2015.

\bibitem{jdw:LangeStannat2019b}
Theresa Lange and Wilhelm Stannat.
\newblock On the continuous time limit of ensemble square root filters.
\newblock {\em arXiv:1910.12493}, 2019.

\bibitem{referee2d}
Kody J.~H. Law, Hamidou Tembine, and Raul Tempone.
\newblock Deterministic mean-field ensemble kalman filtering.
\newblock {\em SIAM Journal on Scientific Computing}, 38(3):A1251--A1279, 2016.

\bibitem{sr:lorenz96}
E.N. Lorenz.
\newblock Predictibility: {A} problem partly solved.
\newblock In {\em Proc. Seminar on Predictibility}, volume~1, pages 1--18,
  ECMWF, Reading, Berkshire, UK, 1996.

\bibitem{MT18cpam}
A.~Majda and X.~T. Tong.
\newblock Performance of ensemble {K}alman filters in large dimensions.
\newblock {\em Communications on Pure and Applied Mathematics}, 71(5):892--937,
  2018.

\bibitem{mandel2011convergence}
J.~Mandel, L.~Cobb, and J.~D. Beezley.
\newblock On the convergence of the ensemble {K}alman filter.
\newblock {\em Applications of Mathematics}, 56(6):533--541, 2011.

\bibitem{referee2b}
Jan Mandel, Loren Cobb, and Jonathan~D. Beezley.
\newblock On the convergence of the ensemble kalman filter.
\newblock {\em Applications of Mathematics}, 533-541:533--541, 2011.

\bibitem{MTM19}
M.~Morzfeld, X.T. Tong, and Y.M. Marzouk.
\newblock Localization for {MCMC}: sampling high-dimensional posterior
  distributions with local structure.
\newblock {\em J. Comput. Phys.}, 310:1--28, 2019.

\bibitem{jdw:OliverReynoldsLiu2008}
D.~Oliver, A.~Reynolds, and N.~Liu.
\newblock {\em Inverse Theory for Petroleum Reservoir Characterization and
  History Matching}.
\newblock Cambridge University Press, 2008.

\bibitem{sr:reich13}
S.~Reich.
\newblock A nonparametric ensemble transform method for {B}ayesian inference.
\newblock {\em SIAM J. Sci. Comput.}, 35:A2013--A2024, 2013.

\bibitem{sr:reichcotter15}
S.~Reich and C.J. Cotter.
\newblock {\em Probabilistic Forecasting and Bayesian Data Assimilation}.
\newblock Cambridge University Press, Cambridge, 2015.

\bibitem{saerkkae2013}
Simo S{\"a}rkk{\"a}.
\newblock {\em Bayesian Filtering and Smoothing}.
\newblock Cambridge University Press, 2013.

\bibitem{Tong18}
X.T. Tong.
\newblock Performance analysis of local ensemble {K}alman filter.
\newblock {\em J. Non. Sci.}, 28(4):1397--1442, 2018.

\bibitem{sr:majda15}
X.T. Tong, A.J. Majda, and D.~Kelly.
\newblock Nonlinear stability and ergodicity of ensemble based {K}alman
  filters.
\newblock {\em Nonlinearity}, 29(2):657, 2016.

\bibitem{MTM20}
X.T. Tong, M.~Morzfeld, and Y.M. Marzouk.
\newblock {MALA}-within-{G}ibbs samplers for high-dimensional distributions
  with sparse conditional structure.
\newblock accepted by SIAM Journal on Scientific Computing, arXiv: 1908.09429,
  2020.

\bibitem{sr:meyn13}
T.~Yang, P.G. Mehta, and S.P. Meyn.
\newblock Feedback particle filter.
\newblock {\em IEEE Trans. Automatic Control}, 58:2465--2480, 2013.

\end{thebibliography}
\end{document}